\documentclass{article}

\usepackage{amssymb}
\usepackage{amsmath}
\usepackage{amsfonts}
\usepackage{amsthm}
\usepackage{algorithm}
\usepackage{a4wide}
\usepackage{hyperref}
\usepackage{enumerate}
\usepackage{mathtools}

\newtheorem{theorem}{Theorem}
\newtheorem{lemma}{Lemma}
\newtheorem{corollary}{Corollary}
\newtheorem{problem}{Problem}
\newtheorem{remark}{Remark}
\newtheorem{claim}{Claim}

\newcommand{\llbracket}{[\![}
\newcommand{\rrbracket}{]\!]}

\DeclareMathOperator{\ord}{ord}
\DeclareMathOperator{\act}{act}
\DeclareMathOperator{\xor}{xor}

\begin{document}

\title{On the Subgroup Distance Problem in Cyclic Permutation Groups}

\author{
  Andreas Rosowski\\
  \text{rosowski@eti.uni-siegen.de}
}
\date{
University of Siegen, Germany
}

\maketitle
\begin{abstract}
We show that the \textsc{Subgroup distance problem} regarding the Hamming distance, the Cayley distance and the $l_\infty$ distance is {\sf NP}-complete when the input group is cyclic. When we restrict the $l_\infty$ distance to fixed values we show that it is {\sf NP}-complete to decide whether there are numbers $z_1,z_2 \in \mathbb{N}$ such that $l_\infty(\beta, \alpha_1^{z_1}\alpha_2^{z_2}) \leq 1$ for permutation $\alpha_1,\alpha_2,\beta \in S_n$ where $\alpha_1$ and $\alpha_2$ commute. However on the positive side we can show that it can be decided in {\sf NL} whether there is a number $z \in \mathbb{N}$ such that $l_\infty(\beta, \alpha^z) \leq 1$ for permutations $\alpha,\beta \in S_n$. For the former we provide a tool, namely for all numbers $t_1,t_2,t \in \mathbb{N}$ where $t$ is required to be odd, $0 \leq t_1 < t_2 < t$ and $t_1 \not\equiv t_2 \bmod q$ for all primes $q \mid t$ we give a constructive proof for the existence of permutations $\alpha,\beta \in S_t$ with $l_\infty(\beta, \alpha^{t_1}) \leq 1$ and $l_\infty(\beta, \alpha^{t_2}) \leq 1$.
\end{abstract}

\section{Introduction}
Bijective functions on a set $\Omega$ are called permutations. The set of all permutations on $\Omega$ forms a group $\mathsf{Sym}(\Omega)$, the so called symmetric group on $\Omega$. The group operator is the composition of functions. Subgroups of $\mathsf{Sym}(\Omega)$ are also called permutation groups. We only consider finite permutation groups. With $S_n$ we denote the symmetric group where $\Omega = \{1,\dots,n\}$. The order of a subgroup of $S_n$, i.e. the number of elements of this group, can be exponentially large in $n$. For instance $S_n$ contains $n!$ permutations. Therefore permutation groups are usually given by a set of generators. And in fact for $n > 3$ every subgroup of $S_n$ can be generated by a generating set of size at most $\frac{n}{2}$ \cite{lucchini} and thereby provides a much more succinct representation. In such a setting where the group elements are no longer given explicitly it is a priori not clear how efficient subgroup membership checking can be done. However it was shown that subgroup membership checking can be done in polynomial time when the permutation group is given by a set of generators \cite{furst,sims}. Later it was shown that it can even be done in {\sf NC} by \cite{babai}. There are many more algorithmic problems that can be solved in polynomial time when the permutation group is given by a set of generators \cite[Chapter~3]{seress}.

Even in the case that a given permutation is not a member of a group $G$ one might still ask how close this permutation is to $G$. This leads us to the following problem that we study:
\begin{problem}[\textsc{Subgroup distance problem}]\label{problemSubgroupDistance}~\\
Input: $\gamma_1,\dots,\gamma_m,\gamma \in S_n, k \in \mathbb{N}$.\\
Question: Is there an element $\delta \in \langle \gamma_1,\dots,\gamma_m \rangle$ such that $d(\gamma,\delta) \leq k$?
\end{problem}
Here $d$ is a metric on $S_n$. Note that the unary encoded number $n$ is part of the input. For evaluation $\pi(i)$ of a permutation $\pi \in S_n$ at position $i \in \{1,\dots,n\}$ we use the notation $i^\pi$. We investigate the \textsc{Subgroup distance problem} with respect to the following metrics:
\begin{itemize}
\item The Hamming distance of two permutations $\tau, \pi \in S_n$ is defined as
\begin{displaymath}
H(\tau, \pi) = |\{i \mid i^\tau \neq i^\pi\}|.
\end{displaymath}
\item The Cayley distance of two permutations $\tau, \pi \in S_n$ is defined as
\begin{displaymath}
C(\tau,\pi) = \text{minimum number of transpositions taking } \tau \text{ to } \pi.
\end{displaymath}
By \cite{diaconis} this can be expressed as
\begin{displaymath}
C(\tau,\pi) = n - \text{number of cycles in } \tau\pi^{-1}
\end{displaymath}
where fixed-points also count as cycles. We will always use the second expression.
\item The $l_\infty$ distance of two permutations $\tau, \pi \in S_n$ is defined as
\begin{displaymath}
l_\infty(\tau,\pi) = \max_{1 \leq i \leq n} |i^\tau - i^\pi|.
\end{displaymath}
\end{itemize}
Our main result is that the \textsc{Subgroup distance problem} regarding these metrics is already {\sf NP}-complete when the input permutation group is cyclic. In an extended arXiv version \cite{rosowski} of this paper we prove analogous results also for the $l_p$ distance (for all $p \geq 1$), the Lee distance, Kendall's tau distance and Ulam's distance. We did not include these results in the present paper in order to keep it clearer and since the proofs are based on ideas similiar to those in this paper. The paper \cite{deza} is a good survey about metrics and their applications, see also \cite[Chapter~6]{diaconis} for more information about these metrics.

Moreover we investigate the \textsc{Subgroup distance problem} regarding the $l_\infty$ distance in the case when $k$ from Problem~\ref{problemSubgroupDistance} is a fixed constant. For $k=1$ we show that the \textsc{Subgroup distance problem} is {\sf NP}-complete when the input group is abelian and given by at least two generators and can be solved in non-deterministic logspace ({\sf NL} for short) when the input group is given by a single generator.

Our motivating results for this are from \cite{buchheim} where it was shown that the \textsc{Subgroup distance problem} regarding all metrics mentioned above is {\sf NP}-complete when the input group is abelian of exponent $2$ and from \cite{onur} where it was shown that the \textsc{Subgroup distance problem} has applications in cryptography. We also would like to mention that the \textsc{Subgroup distance problem} regarding the Cayley distance was already shown to be {\sf NP}-complete when the input group is abelian of exponent $2$ by \cite{pinch}. When considering the \textsc{Subgroup distance problem} in the case $k=0$ this problem simply becomes a subgroup membership problem for permutation groups which can be solved in polynomial time by the Schreier-Sims algorithm \cite{furst,sims} and was later shown to be solvable in {\sf NC} \cite{babai}.

\subsection{Related Work}
In \cite{buchheim} also the maximum subgroup distance problem was studied where for given permutations $\pi_1,\dots,\pi_m,\tau \in S_n$ and $k \in \mathbb{N}$ it is asked whether there is an element $\pi \in \langle \pi_1,\dots,\pi_m \rangle$ such that $d(\tau,\pi) \geq k$? This problem has also been shown to be {\sf NP}-complete when the input group is abelian of exponent $2$ regarding all metrics mentioned in the introduction except for the $l_\infty$ metric. In this case the problem can be solved in polynomial time.

In \cite{cameron} the weight problem and variants were studied. The weight of a permutation $\pi \in S_n$ with respect to some metric $d$ is defined as $w_d(\pi) = d(\pi,\mathrm{id})$ and the question is whether for given permutations $\pi_1,\dots,\pi_m \in S_n$ and $k \in \mathbb{N}$ there is $\pi \in \langle \pi_1,\dots,\pi_m \rangle$ such that $w_d(\pi) = k$? In the maximum weight problem it is instead asked whether there is $\pi \in \langle \pi_1,\dots,\pi_m \rangle$ such that $w_d(\pi) \geq k$? The minimum weight problems asks whether there is $\pi \in \langle \pi_1,\dots,\pi_m \rangle \setminus \{\mathrm{id}\}$ such that $w_d(\pi) \leq k$? These problems regarding several metrics were shown to be {\sf NP}-complete except for the maximum weight problem regarding the $l_\infty$ metric which has been shown to be solvable in polynomial time. Note that the {\sf NP}-completeness of the weight problem regarding the Hamming metric was already shown in \cite{buchheimjunger}.

In \cite{arvind} the computational complexity of the minimum weight problem and the subgroup distance problem was studied in a deterministic setting regarding exact and approximation versions.

In \cite{arvind2} the parameterized complexity of the maximum weight problem regarding the Hamming metric was studied.

\section{Preliminaries}
We will occasionally need the following lemma that seems to be folklore:
\begin{lemma}\label{lemmasplit}
Let $\alpha \in S_n$ be a cycle of length $l \leq n$. Then $\alpha^x$ splits into $\gcd(x,l)$ many disjoint cycles of length $\frac{l}{\gcd(x,l)}$.
\end{lemma}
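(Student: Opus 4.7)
The plan is to parameterize the points of the cycle by residues modulo $l$. I would write $\alpha = (a_0, a_1, \ldots, a_{l-1})$ so that $a_i^\alpha = a_{(i+1) \bmod l}$ for every $i$. A straightforward induction on $x$ then gives $a_i^{\alpha^x} = a_{(i+x) \bmod l}$, which lets me identify the action of $\alpha^x$ on the support of $\alpha$ with translation by $x$ on $\mathbb{Z}/l\mathbb{Z}$.

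From here I would analyze the orbits of this translation. The orbit of the index $i$ under repeated addition of $x$ modulo $l$ consists of $i, i+x, i+2x, \ldots$ and closes as soon as $kx \equiv 0 \pmod{l}$ for the smallest positive $k$. By the standard fact that the additive order of $x$ in $\mathbb{Z}/l\mathbb{Z}$ equals $l/\gcd(x,l)$, every cycle of $\alpha^x$ lying inside $\{a_0, \ldots, a_{l-1}\}$ has length exactly $l/\gcd(x,l)$.

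Finally, since these cycles are pairwise disjoint and cover all $l$ points of the support of $\alpha$, the number of cycles is $l/(l/\gcd(x,l)) = \gcd(x,l)$, which matches the claim. Points outside the support of $\alpha$ are fixed by $\alpha$ and hence by $\alpha^x$, so they do not affect the count of the non-trivial cycles described in the lemma.

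The argument is almost entirely bookkeeping in $\mathbb{Z}/l\mathbb{Z}$, and the only non-trivial ingredient is the identity $\ord_{\mathbb{Z}/l\mathbb{Z}}(x) = l/\gcd(x,l)$, which is standard. I expect the only small stumbling point will be making the reduction to modular arithmetic clean enough that applying this number-theoretic identity becomes a one-line computation.
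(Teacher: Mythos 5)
Your argument is correct and is the standard folklore proof: identify the support of $\alpha$ with $\mathbb{Z}/l\mathbb{Z}$ so that $\alpha^x$ acts by translation by $x$, observe that each orbit has size equal to the additive order $l/\gcd(x,l)$ of $x$ in $\mathbb{Z}/l\mathbb{Z}$, and conclude there are $\gcd(x,l)$ orbits. The paper itself does not prove this lemma but instead cites \cite{lohrey}, so there is nothing to compare against beyond noting that your proof is the expected one.
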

A proof can be found in \cite{lohrey}. All proofs of {\sf NP}-hardness will start from one of the following problems:
\begin{problem}[\textsf{3-SAT}]~\\
Input: a finite set $X$ of variables and a set $C$ of clauses over $X$ with $|c| = 3$ for all $c \in C$.\\
Question: Is there a satisfying truth assignment for $C$?
\end{problem}
\begin{problem}[\textsf{X3HS}]~\\
Input: a finite set $X$ and a set $\mathcal{B} \subseteq 2^X$ of subsets of $X$ all of size $3$.\\
Question: Is there a subset $X' \subseteq X$ such that $|X' \cap C| = 1$ for all $C \in \mathcal{B}$?
\end{problem}
All of these problems are {\sf NP}-complete \cite{gareyjohnson}. For this also note that \textsf{X3HS} is the same problem as positive \textsf{1-in-3-SAT}.

\subsection{Permutations}
We denote with $S_n$ the set of all permutations on the set $\{1,\dots,n\}$ for some integer $n \geq 1$. By $\mathrm{id}$ we denote the permutation that fixes all points. For a permutation $\pi \in S_n$ and all $i \in \{1,\dots,n\}$ we use $i^\pi$ to denote the unique $j \in \{1,\dots,n\}$ such that $\pi(i) = j$. Moreover we evaluate from left to right, i.e. for permutations $\pi_1,\dots,\pi_m \in S_n$ and some $a_0,a_1,\dots,a_m \in \{1,\dots,n\}$ we have $a_0^{\pi_1 \cdots \pi_m} = a_m$ if and only if for $i=1,\dots,m-1$ we have $a_{i-1}^{\pi_i \cdots \pi_m} = a_i^{\pi_{i+1} \cdots \pi_m}$ and $a_{m-1}^{\pi_m} = a_m$.

We assume that permutations are given in standard representation. There are two standard representations: the pointwise representation where a permutation $\pi \in S_n$ is represented by a list $[1^\pi,2^\pi,\dots,n^\pi]$ and the cycle representation where $\pi$ is represented by a list of its pairwise disjoint cycles. Fixed-points are usually not included in this list. The standard representations can be transformed into each other in log-space \cite{cook}.

\subsection{Notations}
For a cycle $\gamma$ we define
\begin{displaymath}
\act(\gamma) = \begin{cases}
\{i\} & \text{if } \gamma \text{ is a 1-cycle identifying the fixed-point } i^\gamma = i\\
\{i \mid i^\gamma \neq i\} & \text{if } \gamma \text{ has length at least }2.
\end{cases}
\end{displaymath}
By $\ord(\alpha)$ where $\alpha \in S_n$ we denote the order of $\alpha$ i.e. the smallest non-negative integer $i \geq 1$ such that $\alpha^i = \mathrm{id}$. With $\nu_p(n)$ we denote the $p$-adic valuation of the integer $n \in \mathbb{Z}$, i.e. the largest positive integer $d$ such that $n \equiv 0 \bmod p^d$. We use the notation $[i,j]$ to denote the set $\{i,i+1,i+2,\dots,j\}$ for integers $i \leq j$. Moreover we use $\llbracket i,j \rrbracket$ to denote the cycle $(i,i+1,i+2,\dots,j) \in S_n$ for non-negative integers $1 \leq i < j \leq n$. We also use $\llbracket i \rrbracket$ instead of $\llbracket 1,i \rrbracket$ for a non-negative integer $2 \leq i \leq n$.

\section{Subgroup Distance Problem}\label{sectionSubgroupDistanceProblem}
In the following sections when we show {\sf NP}-completeness results we only show the hardness since membership in {\sf NP} has already been shown in \cite{buchheim} for all metrics from the introduction.

\subsection{Hamming Distance}
\begin{lemma}\label{lemmahammingdistance}
Let $l \geq 2$ and $0 \leq e \leq l-1$ be integers. Then $\llbracket l \rrbracket^x$ and $\llbracket l \rrbracket^e$ match at $l$ positions if $x \equiv e \bmod l$ and mismatch at $l$ positions if $x \not \equiv e \bmod l$.
\end{lemma}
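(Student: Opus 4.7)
The plan is to compute the action of powers of $\llbracket l \rrbracket$ explicitly and observe that the answer is uniform across the $l$ positions in the support. Since $\llbracket l \rrbracket = (1,2,\dots,l)$, an easy induction on $y$ yields
\begin{displaymath}
i^{\llbracket l \rrbracket^y} = ((i-1+y) \bmod l) + 1 \qquad \text{for every } i \in [1,l] \text{ and every } y \in \mathbb{Z},
\end{displaymath}
and $i^{\llbracket l \rrbracket^y} = i$ for every $i > l$ (i.e.\ any points outside the support are fixed by every power).

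Next I would compare $\llbracket l \rrbracket^x$ with $\llbracket l \rrbracket^e$ pointwise on $[1,l]$. For each such $i$, equality $i^{\llbracket l \rrbracket^x} = i^{\llbracket l \rrbracket^e}$ reduces via the formula above to $(i-1+x) \equiv (i-1+e) \bmod l$, which simplifies to $x \equiv e \bmod l$. The condition is \emph{independent of $i$}, so the desired dichotomy follows immediately: either the congruence holds and all $l$ positions match, or the congruence fails and for \emph{every} $i \in [1,l]$ the two values differ, giving $l$ mismatches.

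There is no real obstacle here; the statement is essentially a direct computation about cyclic rotations. The only thing to be mildly careful about is handling the modular reduction so that one confirms the mismatch is actually genuine (two distinct residues modulo $l$ cannot be shifted by the same amount $i-1$ to coincide modulo $l$, since $0 \leq e \leq l-1$ forces $e$ to already be the canonical representative). With that observation in place, the lemma is proved.
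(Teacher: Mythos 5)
Your proof is correct and essentially identical to the paper's: both expand $i^{\llbracket l \rrbracket^y}$ explicitly and observe that the pointwise equality condition collapses to $x \equiv e \bmod l$ independently of $i$, giving the all-or-nothing dichotomy. The only difference is cosmetic: you phrase the formula via $((i-1+y) \bmod l)+1$ while the paper writes the same thing as a two-case expression with a canonical representative $y \in [0,l-1]$ and then rules out the cross cases.
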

\begin{proof}
Let $1 \leq i \leq l$ and let $0 \leq y \leq l-1$ be such that $y \equiv x \bmod l$. Then we have
\begin{displaymath}
i^{\llbracket l \rrbracket^x} = \begin{cases}
i+y & \text{if } i+y \leq l\\
i+y-l & \text{otherwise}
\end{cases}
\end{displaymath}
and
\begin{displaymath}
i^{\llbracket l \rrbracket^e} = \begin{cases}
i+e & \text{if } i+e \leq l\\
i+e-l & \text{otherwise}.
\end{cases}
\end{displaymath}
Therefore we have $i^{\llbracket l \rrbracket^x} = i^{\llbracket l \rrbracket^e}$ if and only if $i+y = i+e$ or $i+y-l = i+e-l$ if and only if $y = e$ if and only if $x \equiv e \bmod l$. Note that the cases $i+y = i+e-l$ and $i+y-l = i+e$ cannot occur since we would get $y = e-l < 0$ and $y = e+l > l-1$ which contradict $0 \leq y \leq l-1$.
\end{proof}

\begin{theorem}
The {\sc Subgroup distance problem} regarding the Hamming distance is {\sf NP}-complete when the input group is cyclic.
\end{theorem}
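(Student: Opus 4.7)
The plan is to reduce from \textsf{X3HS} by encoding each variable's truth value as a residue of a single exponent $z$ modulo a distinct prime, and exploiting Lemma~\ref{lemmahammingdistance} so that each cycle of $\alpha$ contributes either $0$ or its own length to the Hamming distance. I would assign to variable $x_i$ a distinct odd prime $p_i$ (e.g.\ the first $n$ odd primes, polynomial in $n$) and build $\alpha$ as a disjoint union of cycles whose lengths all divide $P := \prod_i p_i$. Then $\langle\alpha\rangle$ is cyclic of order $P$, and by the Chinese Remainder Theorem the residues $r_i := z \bmod p_i$ can be chosen independently.

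The construction has two kinds of gadgets. The variable gadget for $x_i$ contributes two length-$p_i$ cycles to $\alpha$, and on their supports $\beta$ is set to $\llbracket p_i\rrbracket^0$ and $\llbracket p_i\rrbracket^1$ respectively; by Lemma~\ref{lemmahammingdistance} this gadget contributes exactly $p_i$ mismatches when $r_i \in \{0,1\}$ and $2p_i$ otherwise, so a tight budget will force the binary encoding $x_i \in X' \Leftrightarrow r_i = 1$. For each triple $C_j = \{x_{a_j},x_{b_j},x_{c_j}\} \in \mathcal{B}$, the clause gadget contributes three length-$L_j$ cycles, where $L_j := p_{a_j}p_{b_j}p_{c_j}$, with $\beta$ on the $t$-th support set to $\llbracket L_j\rrbracket^{e^{(t)}_j}$ where $e^{(t)}_j$ is the CRT lift that is $\equiv 1$ modulo the $t$-th prime of $\{p_{a_j},p_{b_j},p_{c_j}\}$ and $\equiv 0$ modulo the other two. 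Because $e^{(1)}_j, e^{(2)}_j, e^{(3)}_j$ are pairwise distinct modulo $L_j$ (they already differ modulo each $p_{a_j},p_{b_j},p_{c_j}$), at most one of the three cycles can match for any given $z$, so the clause gadget contributes at least $2L_j$ mismatches, with equality iff exactly one of $r_{a_j},r_{b_j},r_{c_j}$ is $1$ (forcing the other two to be $0$).

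I would then set $k := \sum_i p_i + 2\sum_j L_j$. For the forward direction, given an \textsf{X3HS} witness $X'$, define $r_i = \mathbf{1}[x_i \in X']$, lift to $z$ by CRT, and verify that every variable gadget realizes its minimum $p_i$ and every clause gadget its minimum $2L_j$, hitting $k$ exactly. For the reverse direction, since the variable gadgets contribute at least $\sum_i p_i$ and the clause gadgets at least $2\sum_j L_j$ for every $z$, a witness of distance $\leq k$ must attain equality in every single gadget, which forces $r_i \in \{0,1\}$ globally and exactly one $r \in \{r_{a_j},r_{b_j},r_{c_j}\}$ equal to $1$ for each $j$; reading off $X' := \{x_i : r_i = 1\}$ gives an \textsf{X3HS} solution. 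All cycle lengths are at most $L_j = O((n\log n)^3)$, so the construction is polynomial time.

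The main obstacle is making the local constraint ``exactly one of three residues equals $1$'' enforceable by one global additive budget, since Lemma~\ref{lemmahammingdistance} only provides all-or-nothing per cycle. This is the step where the clause gadget really has to be designed, and it works precisely because the three matching conditions $z \equiv e^{(t)}_j \pmod{L_j}$ within a clause are pairwise incompatible, turning the bound ``at least $2L_j$'' into a rigid lower bound whose equality case coincides exactly with the \textsf{X3HS} constraint.
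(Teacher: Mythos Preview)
Your proposal is correct and follows essentially the same strategy as the paper: encode truth values via residues modulo distinct primes, use a two-cycle variable gadget whose minimum Hamming contribution forces each residue into $\{0,1\}$, and use a multi-copy clause gadget of cycles of length $\prod p_i$ in which at most one copy can match, so that equality in the global budget pins down the admissible residue patterns. The only noteworthy difference is the choice of source problem: the paper reduces from \textsc{3-SAT} and therefore uses seven copies per clause (one per satisfying assignment), whereas you reduce from \textsf{X3HS} and need only three copies (one per ``exactly one selected'' pattern), which is a modest simplification since it sidesteps negated literals.
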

\begin{proof}
We give a log-space reduction from \textsc{3-SAT}. Let $X = \{x_1,\dots,x_n\}$ be a set of variables and let $C = \{c_1,\dots,c_m\}$ be a set of clauses over $X$ where $c_j$ contains exactly $3$ different literals for all $j \in [1,m]$. W.l.o.g. we can assume that no clause contains a positive and a negative literal regarding the same variable. For $j \in [1,m]$ we define $I_j \subseteq [1,n]$ as the set of all indices $i$ such that $c_j \cap \{x_i,\bar{x}_i\} \neq \emptyset$. Let $p_1,\dots,p_n$ be the first $n$ odd primes. Moreover let $q_j = \prod_{i \in I_j} p_i$ for $j=j \in [1,m]$ and let $N = 2\sum_{i=1}^n p_i + 7\sum_{j=1}^m q_j$. We will work with the group $G \leq S_N$ in which
\begin{displaymath}
G = \prod_{i=1}^n V_i \times \prod_{j=1}^m U_j
\end{displaymath}
with $V_i = S_{p_i}^2$ and $U_j = S_{q_j}^7$. We define the input group elements as
\begin{align*}
\tau&= (\alpha_1,\dots,\alpha_n, \beta_1,\dots,\beta_m)\\
\pi&= (\gamma_1,\dots,\gamma_n,\delta_1,\dots,\delta_m)
\end{align*}
with $\alpha_i = (\llbracket p_i \rrbracket, \mathrm{id})$ and $\gamma_i = (\llbracket p_i \rrbracket, \llbracket p_i \rrbracket)$ for $i \in [1,n]$. To define $\beta_j$ and $\delta_j$ for $j \in [1,m]$ consider the clause $c_j$. There are $7$ truth assignments of the variables occuring in this clause that satisfy this clause. Let $\sigma_1,\dots,\sigma_7$ be the truth assignments of the variables occuring in this clause that satisfy the clause. Then we define for all $j \in [1,m]$ and $l \in [1,7]$ numbers $0 \leq z_{j,l} \leq q_j-1$ as the smallest positive integers satisfying the congruences
\begin{displaymath}
z_{j,l} \equiv \sigma_l(x_i) \bmod p_i
\end{displaymath}
for all $i \in I_j$. Then we define for $j \in [1,m]$
\begin{align*}
\beta_j&=(\llbracket  q_j \rrbracket^{z_{j,1}}, \llbracket  q_j \rrbracket^{z_{j,2}}, \llbracket  q_j \rrbracket^{z_{j,3}}, \llbracket  q_j \rrbracket^{z_{j,4}}, \llbracket  q_j \rrbracket^{z_{j,5}}, \llbracket  q_j \rrbracket^{z_{j,6}}, \llbracket  q_j \rrbracket^{z_{j,7}})\\
\delta_j&=(\llbracket  q_j \rrbracket, \llbracket  q_j \rrbracket, \llbracket  q_j \rrbracket, \llbracket  q_j \rrbracket, \llbracket  q_j \rrbracket, \llbracket  q_j \rrbracket, \llbracket  q_j \rrbracket).
\end{align*}
Finally we set $k = \sum_{i=1}^n p_i + 6\sum_{j=1}^m q_j$. Now we show that $C$ is satisfiable if and only if there is a number $z \in \mathbb{N}$ such that $H(\tau,\pi^z) \leq k$. Suppose $C$ is satisfiable and let $\sigma$ be a truth assignment that satisfies $C$. Let $0 \leq z \leq \prod_{i=1}^n p_i - 1$ be the smallest positive integer satisfying the congruence $z \equiv \sigma(x_i) \bmod p_i$ for $i \in [1,n]$. Consider $\alpha_i$ and $\gamma_i$. Clearly we have that $(\llbracket  p_i \rrbracket, \mathrm{id})$ and $(\llbracket  p_i \rrbracket, \llbracket  p_i \rrbracket)^z$ match at $p_i$ positions. Now consider $\beta_j$ and $\delta_j$ for some $j \in [1,m]$. Since $C$ is satisfied by $\sigma$ there is an $l \in [1,7]$ such that $\sigma_l(x_i) = \sigma(x_i)$ for $i \in I_j$. Hence we have $z \equiv z_{j,l} \bmod q_j$. Then we have $\llbracket  q_j \rrbracket^z = \llbracket  q_j \rrbracket^{z_{j,l}}$ which gives us that
\begin{displaymath}
\delta_j^z = (\llbracket  q_j \rrbracket^{z_{j,l}}, \llbracket  q_j \rrbracket^{z_{j,l}}, \llbracket  q_j \rrbracket^{z_{j,l}}, \llbracket  q_j \rrbracket^{z_{j,l}}, \llbracket  q_j \rrbracket^{z_{j,l}}, \llbracket  q_j \rrbracket^{z_{j,l}}, \llbracket  q_j \rrbracket^{z_{j,l}})
\end{displaymath}
and matches with
\begin{displaymath}
\beta_j = (\llbracket  q_j \rrbracket^{z_{j,1}}, \llbracket  q_j \rrbracket^{z_{j,2}}, \llbracket  q_j \rrbracket^{z_{j,3}}, \llbracket  q_j \rrbracket^{z_{j,4}}, \llbracket  q_j \rrbracket^{z_{j,5}}, \llbracket  q_j \rrbracket^{z_{j,6}}, \llbracket  q_j \rrbracket^{z_{j,7}})
\end{displaymath}
at $q_j$ positions. This gives us a total of $\sum_{i=1}^n p_i + \sum_{j=1}^m q_j$ matching positions. Subtracting this number from the total number of positions gives us
\begin{displaymath}
H(\tau,\pi^z) = 2\sum_{i=1}^n p_i + 7\sum_{j=1}^m q_j - (\sum_{i=1}^n p_i + \sum_{j=1}^m q_j) = \sum_{i=1}^n p_i + 6\sum_{j=1}^m q_j = k
\end{displaymath}
mismatches.

Vice versa suppose $H(\tau,\pi^z) \leq k$ for some $z \in \mathbb{N}$. Consider $\alpha_i$ and $\gamma_i$. By Lemma~\ref{lemmahammingdistance} we have that $(\llbracket  p_i \rrbracket, \mathrm{id})$ and $(\llbracket  p_i \rrbracket, \llbracket  p_i \rrbracket)^z$ match at $p_i$ positions if $z \equiv 0,1 \bmod p_i$ or at no position otherwise. Moreover
\begin{align*}
\delta_j^z&= (\llbracket  q_j \rrbracket^z, \llbracket  q_j \rrbracket^z, \llbracket  q_j \rrbracket^z, \llbracket  q_j \rrbracket^z, \llbracket  q_j \rrbracket^z, \llbracket  q_j \rrbracket^z, \llbracket  q_j \rrbracket^z)\\
\beta_j&= (\llbracket  q_j \rrbracket^{z_{j,1}}, \llbracket  q_j \rrbracket^{z_{j,2}}, \llbracket  q_j \rrbracket^{z_{j,3}}, \llbracket  q_j \rrbracket^{z_{j,4}}, \llbracket  q_j \rrbracket^{z_{j,5}}, \llbracket  q_j \rrbracket^{z_{j,6}}, \llbracket  q_j \rrbracket^{z_{j,7}})
\end{align*}
match at $q_j$ positions if $z \equiv z_{j,1},\dots,z_{j,7} \bmod q_j$ or at no position otherwise. By counting the number of possible matchings we find that we can match at most $\sum_{i=1}^n p_i + \sum_{j=1}^m q_j$ positions. By noting that $k+\sum_{i=1}^n p_i + \sum_{j=1}^m q_j$ equals the total number of positions we obtain that in every coordinate of $G$ we need the maximal number of matchings. Therefore we have for all $i \in [1,n]$ the congruence $z \equiv 0,1 \bmod p_i$. Therefore $z$ encodes a truth assignment of the variables. Since the $z_{j,l}$ encode satisfying truth assignments of $c_j$ we find that the truth assignment encoded by $z$ satisfies all clauses. Therefore we obtain by
\begin{displaymath}
\sigma(x_i) = \begin{cases}
1 & \text{if } z \equiv 1 \bmod p_i\\
0 & \text{if } z \equiv 0 \bmod p_i
\end{cases}
\end{displaymath}
a satisfying truth assignment for $C$.
\end{proof}

\subsection{Cayley Distance}
\begin{lemma}\label{lemmaprimelargeenough}
Let $n \geq 1$ be an integer. Let us denote by $\hat{p}_k$ the $k^{\text{th}}$ prime, i.e. $\hat{p}_1 = 2, \hat{p}_2 = 3, \dots$. Then $\hat{p}_{n^2+86}^3 > 6\hat{p}_{n^2+n+85}^2$.
\end{lemma}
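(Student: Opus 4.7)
The plan is to use explicit bounds on the $k$-th prime to show that $\hat{p}_a$ and $\hat{p}_b$ (writing $a = n^2+86$ and $b = n^2+n+85$) are of comparable size, after which the factor $6$ is absorbed easily because $\hat{p}_a \geq \hat{p}_{87}$ is already very large. Concretely, I would invoke the classical bounds of Rosser and Rosser--Schoenfeld:
\begin{equation*}
k \ln k \;<\; \hat{p}_k \;<\; k(\ln k + \ln \ln k),
\end{equation*}
both of which apply for $k \geq 6$, hence in particular for $k \in \{a,b\}$ since $a,b \geq 87$. Dividing these bounds at $k = b$ by those at $k = a$ gives
\begin{equation*}
\frac{\hat{p}_b}{\hat{p}_a} \;\leq\; \frac{b}{a} \cdot \frac{\ln b + \ln \ln b}{\ln a}.
\end{equation*}

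The central uniform estimate is that each factor on the right is bounded by a small absolute constant. For the first, $b/a - 1 = (n-1)/(n^2+86)$, and an elementary calculus check locates the maximum at $n = 1 + \sqrt{87}$, yielding $b/a \leq 1.05$ for every $n \geq 1$. For the second, since $\ln \ln x / \ln x$ is decreasing on $[e^e, \infty)$ and $a \geq 87 > e^e$, together with $\ln b \leq \ln a + \ln 1.05$, one obtains $(\ln b + \ln \ln b)/\ln a$ bounded by a constant not far from $1.4$. Combining these, $\hat{p}_b/\hat{p}_a < 3/2$ with comfortable slack, so $6 \hat{p}_b^2 < 14 \hat{p}_a^2$.

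The desired inequality $\hat{p}_a^3 > 6 \hat{p}_b^2$ then reduces to $\hat{p}_a > 14$, which is immediate from $\hat{p}_a \geq \hat{p}_{87} = 449$. I do not anticipate any serious obstacle: the shifts $86$ and $85$ are chosen precisely so that $a, b \geq 87$, making the Rosser--Schoenfeld bounds applicable and simultaneously ensuring the base prime is already well beyond the threshold demanded by the factor $6$. The margin is so large that crude estimates on every constant suffice, and no case-by-case verification for small $n$ should be needed.
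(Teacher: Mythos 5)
Your proposal is correct, and it takes a genuinely different (and arguably cleaner) route than the paper. The paper invokes Dusart's sharper lower bound $\hat{p}_k \geq k(\ln k + \ln\ln k - 1)$, immediately discards the $\ln\ln k - 1 > 0$ term, and then proves the inequality by a single direct chain of algebraic manipulations, inserting the factor $384 = 6\cdot 64$ via the check $(n^2+86)\ln(n^2+86) \geq 87\ln 87 > 384$ and finishing with $2(n^2+86) > n^2+n+85$ and $4\ln(n^2+86) > \ln(n^2+n+85) + \ln\ln(n^2+n+85)$. You instead use only the weaker classical lower bound $\hat{p}_k > k\ln k$, explicitly bound the ratio $\hat{p}_b/\hat{p}_a < 3/2$, and then observe that the cubic-versus-quadratic mismatch reduces the claim to $\hat{p}_a > 6\cdot(3/2)^2 = 13.5$, which is immediate. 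Both arguments ultimately rest on the same fact (that $\hat{p}_{87}$, the smallest prime in play, is already large), but your decomposition into ``the two primes are of comparable size'' plus ``one extra factor of $\hat{p}_a$ absorbs the constant'' makes the mechanism transparent and yields far more slack (a ratio bound of $3/2$ instead of the paper's implicit factor of $8$, and a threshold of $14$ instead of $384$). One small caveat on your own phrasing: your quoted intermediate estimate ``not far from $1.4$'' for $(\ln b + \ln\ln b)/\ln a$ would combine with $b/a \leq 1.05$ to give $\approx 1.47$, which is under $3/2$ but not by ``comfortable slack''; in fact a careful evaluation at $a = 87$ shows that factor is below $1.35$, so the actual slack is real, but the writeup should tighten that constant to match the conclusion you assert.
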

\begin{proof}
We have
\begin{equation}\label{lowerboundprime}
\hat{p}_k \geq k(\ln k+\ln\ln k-1) \text{ for all } k \geq 2 \text{ by \cite[Theorem~3]{dusart}}.
\end{equation}
Moreover we have
\begin{displaymath}
\hat{p}_k \leq k(\ln k + \ln\ln k) \text{ if } 6 \leq k \leq e^{95} \text{ by \cite[Theorem~28]{rosser}}
\end{displaymath}
and
\begin{displaymath}
\hat{p}_k \leq k(\ln k + \ln\ln k - 0.9484) \text{ for all } k \geq 39017 \text{ by \cite[Chapter~4]{dusart}}
\end{displaymath}
which gives us
\begin{equation}\label{upperboundprime}
\hat{p}_k \leq k(\ln k + \ln\ln k) \text{ for all } k \geq 6.
\end{equation}
Using \eqref{lowerboundprime} we obtain
\begin{displaymath}
\hat{p}_{n^2+86}^3 \geq (n^2+86)^3(\ln(n^2+86)+\ln\ln(n^2+86)-1)^3
\end{displaymath}
and \eqref{upperboundprime} gives us
\begin{displaymath}
\hat{p}_{n^2+n+85}^2 \leq (n^2+n+85)^2(\ln(n^2+n+85) + \ln\ln(n^2+n+85))^2.
\end{displaymath}
From this it follows now that
\begin{align*}
\hat{p}_{n^2+86}^3&\geq (n^2+86)^3(\ln(n^2+86)+\ln\ln(n^2+86)-1)^3\\
&> (n^2+86)^3\ln(n^2+86)^3\\
&= (n^2+86)\ln(n^2+86)(n^2+86)^2\ln(n^2+86)^2\\
&> 384(n^2+86)^2\ln(n^2+86)^2\\
&= 6 \cdot 64(n^2+86)^2\ln(n^2+86)^2\\
&= 6 \cdot 4(n^2+86)^2 \cdot 16\ln(n^2+86)^2\\
&= 6(2(n^2+86))^2(2\ln(n^2+86)+2\ln(n^2+86))^2\\
&> 6(n^2+n+85)^2(\ln(n^2+n+85)+\ln\ln(n^2+n+85))^2\\
& \geq 6\hat{p}_{n^2+n+85}^2
\end{align*}
for all $n \geq 1$ which shows the lemma.
\end{proof}

\begin{remark}
Although the estimation $\hat{p}_{n^2+86}^3 > 6\hat{p}_{n^2+n+85}^2$ of Lemma~\ref{lemmaprimelargeenough} is not very accurate it is sufficient for our purposes. And in fact it can be shown that already $\hat{p}_{n+8}^3 > 6\hat{p}_{2n+7}^2$ for all $n \geq 1$ but a formal proof needs a more complicated technique.
\end{remark}

\begin{theorem}
The {\sc Subgroup distance problem} regarding the Cayley distance is {\sf NP}-complete when the input group is cyclic.
\end{theorem}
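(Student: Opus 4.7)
The plan is to reduce from \textsc{3-SAT} using the same block-structured construction as in the Hamming case, and to control the extra cycle contributions that arise from the gcd-structure of Cayley distance by means of Lemma~\ref{lemmaprimelargeenough}. Given a \textsc{3-SAT} instance with variables $X = \{x_1,\dots,x_n\}$ and clauses $C = \{c_1,\dots,c_m\}$, I pick $n$ distinct odd primes $p_1,\dots,p_n$ from the range $[\hat{p}_{n^2+86},\hat{p}_{n^2+n+85}]$, so that $p_i^3 > 6\,p_j^2$ holds for all $i,j$ by Lemma~\ref{lemmaprimelargeenough}. As before I set $q_j = \prod_{i\in I_j} p_i$ and reuse the Hamming construction: $\pi$ is the direct product over variables of two copies of $\llbracket p_i \rrbracket$ and over clauses of seven copies of $\llbracket q_j \rrbracket$, and $\tau$ has variable blocks $(\llbracket p_i \rrbracket,\mathrm{id})$ together with clause blocks $(\llbracket q_j \rrbracket^{z_{j,1}},\dots,\llbracket q_j \rrbracket^{z_{j,7}})$ where the $z_{j,l}$ encode the seven satisfying truth assignments of $c_j$ via CRT. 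Since the cycle lengths appearing in $\pi$ are all products of distinct primes from $\{p_1,\dots,p_n\}$, the group $\langle\pi\rangle$ is cyclic of order $\prod_i p_i$.

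The next step is to count the cycles of $\tau\pi^{-z}$ block by block using Lemma~\ref{lemmasplit}. A variable block for $x_i$ yields $p_i+1$ cycles exactly when $z\equiv 0$ or $1\pmod{p_i}$ (the FALSE/TRUE encodings) and only $2$ cycles otherwise, so a variable where $z$ fails to encode a truth value incurs a loss of at least $\hat{p}_{n^2+86}-1$ cycles in that block. In a clause block, the $l$-th coordinate contributes $\gcd(z_{j,l}-z,\,q_j)$ cycles: if $z\equiv z_{j,l_0}\pmod{q_j}$ for some satisfying $l_0$ then one coordinate is the identity (contributing $q_j$ cycles) while the remaining six contribute proper divisors of $q_j$, each bounded by $q_j/p_{\min(I_j)}\le \hat{p}_{n^2+n+85}^2$; if instead $z$ matches the unique unsatisfying assignment modulo $q_j$, then all seven coordinates contribute at most $q_j/p_{\min(I_j)}$ cycles.

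Set $k = N - c^{\ast}$, where $c^{\ast}$ denotes the minimum cycle count of $\tau\pi^{-z}$ over all $z$ that encode a satisfying truth assignment; $c^{\ast}$ has an explicit formula in terms of the $p_i$ and $q_j$. The forward direction is immediate: applying CRT to any satisfying $\sigma$ produces a $z$ with $c(\tau\pi^{-z})\geq c^{\ast}$, hence $C(\tau,\pi^z)\leq k$. The main obstacle, and the whole reason Lemma~\ref{lemmaprimelargeenough} appears, is the reverse direction. A non-satisfying $z$ must be either invalid at some variable (losing at least $\hat{p}_{n^2+86}-1$ variable cycles) or valid everywhere at the variables but unsatisfying at some clause $c_j$, in which case the clause block loses at least $q_j(1-7/p_{\min(I_j)})$ cycles relative to its satisfying configurations. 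The residual gcd contributions from all other (satisfying) clause blocks fluctuate by at most $q_j/p_{\min(I_j)}-1$ each, and every such residual is bounded by $\hat{p}_{n^2+n+85}^2$. The inequality $\hat{p}_{n^2+86}^3 > 6\,\hat{p}_{n^2+n+85}^2$ of Lemma~\ref{lemmaprimelargeenough} provides exactly the quantitative separation needed to show that, after summing over blocks, the per-bad-block loss of order $\hat{p}_{n^2+86}^3$ dominates the total residual fluctuation, so that $c^{\ast}$ strictly exceeds the cycle count of every non-satisfying $z$. This yields the desired equivalence and furnishes the log-space reduction.
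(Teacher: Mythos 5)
Your proposal has a genuine gap at the point where you assert that ``$c^{\ast}$ has an explicit formula in terms of the $p_i$ and $q_j$.'' With the Hamming-style clause block (seven copies of $\llbracket q_j\rrbracket$ raised to the exponents $z_{j,1},\dots,z_{j,7}$ that encode the seven satisfying partial assignments), the cycle count of $\tau_j\pi_j^{-z}$ when $z\equiv z_{j,l_0}\bmod q_j$ is $q_j+\sum_{l\neq l_0}\gcd(z_{j,l}-z_{j,l_0},q_j)$, and this sum genuinely depends on $l_0$. The gcd term for $l\neq l_0$ is the product of the primes $p_i$ ($i\in I_j$) on which $\sigma_l$ and $\sigma_{l_0}$ agree; since the seven satisfying assignments are the Boolean cube $\{0,1\}^3$ minus one vertex $v$, the multiset of pairwise agreement patterns seen from $\sigma_{l_0}$ varies with the Hamming distance of $\sigma_{l_0}$ from $v$. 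Working it out: if $u=v\oplus e_a$ the block contributes $q_j+p_a(p_b+p_c)+p_a+p_b+p_c+1$, whereas if $u=\bar v$ (distance $3$) it contributes $q_j+p_ap_b+p_ap_c+p_bp_c+p_a+p_b+p_c$, a difference of order $p_bp_c$. Thus the clause contributions fluctuate by $\Theta(p_n^2)$ among satisfying $z$, and because the primes are shared across clauses one cannot simultaneously minimize every block. Consequently $c^{\ast}$ is not a formula a log-space reduction can write down; it is essentially the optimization you are trying to decide. Moreover, even if you set $k$ by some guessed threshold, the reverse direction fails quantitatively: the total fluctuation over the $m$ clause blocks is $\Theta(mp_n^2)$, while the per-block loss from one violated clause is only $\Theta(q_j)\approx p_1^3$. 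The inequality $p_1^3>6p_n^2$ from Lemma~\ref{lemmaprimelargeenough} does not dominate a term of size $mp_n^2$ once $m$ grows.

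The paper circumvents both problems by reducing from \textsf{X3HS} rather than \textsc{3-SAT}, and by replacing the seven-copy block with a tailored six-copy block whose exponents $s_{j,1},\dots,s_{j,6}$ are chosen so that when $z\equiv s_{j,a}\bmod q_j$ for the chosen $a\in[1,3]$, the contributions rotate perfectly: coordinate $a$ gives $q_j$ fixed points, the other two of $s_{j,1},s_{j,2},s_{j,3}$ give the two primes $p_{i_b},p_{i_c}$ with $b,c\neq a$, coordinate $a+3$ gives $p_{i_a}$, and the remaining two coordinates contribute a single cycle each, totalling $q_j+2+\sum_{i\in C_j}p_i$ \emph{independently of $a$} (Claim~\ref{claimsplitupperboundcycles}). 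This exact uniformity is the crux: it lets the paper fix $k=N-\sum_j(q_j+2+\sum_{i\in C_j}p_i)$ and argue per clause that any valid solution achieves the maximum cycle count while any invalid residue falls strictly short, with Lemma~\ref{lemmaprimelargeenough} used only for a per-block comparison ($6p_n^2<p_1^3<q_j$), never summed over $m$ blocks. Your construction lacks this uniformity, and without a replacement for it the reduction does not go through.
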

\begin{proof}
We give a log-space reduction from \textsf{X3HS}. Let $X$ be a finite set and $\mathcal{B} \subseteq 2^X$ be a set of subsets of $X$ all of size $3$. W.l.o.g. assume that $X = [1,n]$ and let $\mathcal{B} = \{C_1,\dots,C_m\}$. Let $p_1 < \dots < p_n$ be the first $n$ primes such that $p_1^3 > 6p_n^2$. Note that $p_1,p_n \in O(n^2 \log n)$ by Lemma~\ref{lemmaprimelargeenough} and the prime number theorem. We define $q_j = \prod_{i \in C_j} p_i$ for all $j \in [1,m]$. We will work with the group
\begin{displaymath}
G = \prod_{j=1}^m S_{q_j}^6
\end{displaymath}
which naturally embedds into $S_N$ for $N = 6\sum_{j=1}^m q_j$. Moreover for $j \in [1,m]$ and all $d \in [1,6]$ we define the number $0 \leq s_{j,d} < q_j$ as the smallest positive integer satisfying the congruences in which we assume $C_j = \{i_1,i_2,i_3\}$ with $i_1 < i_2 < i_3$
\begin{align*}
s_{j,1}&\equiv 1 \bmod p_{i_1} && s_{j,2} \equiv 0 \bmod p_{i_1} && s_{j,3} \equiv 0 \bmod p_{i_1}\\
s_{j,1}&\equiv 0 \bmod p_{i_2} && s_{j,2} \equiv 1 \bmod p_{i_2} && s_{j,3} \equiv 0 \bmod p_{i_2}\\
s_{j,1}&\equiv 0 \bmod p_{i_3} && s_{j,2} \equiv 0 \bmod p_{i_3} && s_{j,3} \equiv 1 \bmod p_{i_3}\\
\\[3mm]
s_{j,4}&\equiv 1 \bmod p_{i_1} && s_{j,5} \equiv 3 \bmod p_{i_1} && s_{j,6} \equiv 2 \bmod p_{i_1}\\
s_{j,4}&\equiv 2 \bmod p_{i_2} && s_{j,5} \equiv 1 \bmod p_{i_2} && s_{j,6} \equiv 3 \bmod p_{i_2}\\
s_{j,4}&\equiv 3 \bmod p_{i_3} && s_{j,5} \equiv 2 \bmod p_{i_3} && s_{j,6} \equiv 1 \bmod p_{i_3}.
\end{align*}
We define the input group elements $\tau, \pi \in G$ as follows where $j$ ranges over $[1,m]$:
\begin{align*}
\tau &= (\tau_1,\dots,\tau_m)\\
\tau_j &= (\llbracket q_j \rrbracket^{s_{j,1}}, \llbracket q_j \rrbracket^{s_{j,2}}, \llbracket q_j \rrbracket^{s_{j,3}}, \llbracket q_j \rrbracket^{s_{j,4}}, \llbracket q_j \rrbracket^{s_{j,5}}, \llbracket q_j \rrbracket^{s_{j,6}})\\
&\\
\pi &= (\pi_1,\dots,\pi_m)\\
\pi_j &= (\llbracket q_j \rrbracket, \llbracket q_j \rrbracket, \llbracket q_j \rrbracket, \llbracket q_j \rrbracket, \llbracket q_j \rrbracket, \llbracket q_j \rrbracket)
\end{align*}
and we define
\begin{displaymath}
k = N - \sum_{j=1}^m (q_j + 2 + \sum_{i \in C_j} p_i).
\end{displaymath}
Now we will show there is $x \in \mathbb{N}$ such that $C(\tau,\pi^x) \leq k$ if and only if there is a subset $X' \subseteq X$ such that $|X' \cap C_j| = 1$ for all $j \in [1,m]$.

Suppose there is $x \in \mathbb{N}$ such that $C(\tau,\pi^x) \leq k$. We define
\begin{displaymath}
X' = \{i \in [1,n] \mid x \equiv 1 \bmod p_i\}.
\end{displaymath}
\begin{claim}\label{claimsplitupperboundcycles}
For all $j \in [1,m]$ and all $z \in \mathbb{Z}$ we have that $\tau_j\pi_j^{-z}$ splits into exactly $q_j + 2 + \sum_{i \in C_j} p_i$ cycles if there is $a \in [1,3]$ such that $z \equiv s_{j,a} \bmod q_j$ or in strictly less than $q_j + 2 + \sum_{i \in C_j} p_i$ cycles if $z \not \equiv s_{j,a} \bmod q_j$ for all $a \in [1,3]$.
\end{claim}
Let $j \in [1,m]$ and assume $C_j = \{i_1,i_2,i_3\}$ with $i_1 < i_2 < i_3$. Note that for all $d \in [1,6]$ we have that $\llbracket q_j \rrbracket^{s_{j,d}-z}$ will split into $\gcd(q_j,s_{j,d}-z)$ cycles of length $\frac{q_j}{\gcd(q_j,s_{j,d}-z)}$ by Lemma~\ref{lemmasplit}.

Suppose there is an $a \in [1,3]$ such that $z \equiv s_{j,a} \bmod q_j$. Then clearly $z \not \equiv s_{j,c} \bmod q_j$ for all $c \in [1,6] \setminus \{a\}$ since $s_{j,e} \not \equiv s_{j,f} \bmod q_j$ for all $e \neq f$. Moreover we have for all $b \in [1,3] \setminus \{a\}$ and all $c \in [1,3]$
\begin{displaymath}
s_{j,b+3} - z \equiv s_{j,b+3} - s_{j,a} \not \equiv 0 \bmod p_{i_c}
\end{displaymath}
and hence $\llbracket q_j \rrbracket^{s_{j,b+3}-z}$ will not split into further cycles by Lemma~\ref{lemmasplit}. Moreover we have for all $b \in [1,3] \setminus \{a\}$
\begin{displaymath}
s_{j,b} - z \equiv s_{j,b} - s_{j,a} \begin{cases}
\equiv 0 \bmod p_{i_c} & \text{if } c \in [1,3] \setminus \{a,b\}\\
\not \equiv 0 \bmod p_{i_c} & \text{if } c \in \{a,b\}
\end{cases}
\end{displaymath}
and hence $\llbracket q_j \rrbracket^{s_{j,b}-z}$ will split into $p_{i_c}$ cycles by Lemma~\ref{lemmasplit} with $c \in [1,3] \setminus \{a,b\}$. Moreover we have
\begin{displaymath}
s_{j,a} - z \equiv s_{j,a} - s_{j,a} \equiv 0 \bmod q_j
\end{displaymath}
and hence $\llbracket q_j \rrbracket^{s_{j,a}-z}$ will split into $q_j$ fixed points by Lemma~\ref{lemmasplit}. Finally we have
\begin{displaymath}
s_{j,a+3} - z \equiv s_{j,a+3} - s_{j,a} \equiv 1 - 1 \equiv 0 \bmod p_{i_a}
\end{displaymath}
and for all $b \in [1,3] \setminus \{a\}$ we have
\begin{displaymath}
s_{j,a+3} - z \equiv s_{j,a+3} - s_{j,a} \equiv s_{j,a+3} - 0 \not \equiv 0 \bmod p_{i_b}
\end{displaymath}
and hence $\llbracket q_j \rrbracket^{s_{j,a+3}-z}$ will split into $p_{i_a}$ cycles by Lemma~\ref{lemmasplit}. Thus the total number of cycles in $\tau_j\pi_j^{-z}$ is
\begin{displaymath}
q_j + 2 + \sum_{i \in C_j} p_i.
\end{displaymath}

Suppose $z \not \equiv s_{j,a} \bmod q_j$ for all $a \in [1,3]$. If also $z \not \equiv s_{j,a} \bmod q_j$ for all $a \in [4,6]$ then $\tau_j\pi_j^{-z}$ can only split into at most $6p_n^2$ cycles which is strictly less than $q_j + 2 + \sum_{i \in C_j} p_i$ since we already have
\begin{displaymath}
6p_n^2 < p_1^3 < q_j.
\end{displaymath}
In the case $z \equiv s_{j,a} \bmod q_j$ for some $a \in [4,6]$ we have $z \not \equiv s_{j,c} \bmod q_j$ for all $c \in [1,6] \setminus \{a\}$ since $s_{j,e} \not \equiv s_{j,f} \bmod q_j$ for all $e \neq f$. Moreover we have for all $b \in [1,3] \setminus \{a-3\}$ and all $c \in [1,3]$
\begin{displaymath}
s_{j,b} - z \equiv s_{j,b} - s_{j,a} \not \equiv 0 \bmod p_{i_c}
\end{displaymath}
and hence $\llbracket q_j \rrbracket^{s_{j,b}-z}$ will not split into further cycles by Lemma~\ref{lemmasplit}. Similarly we have for all $b \in [4,6] \setminus \{a\}$ and all $c \in [1,3]$
\begin{displaymath}
s_{j,b} - z \equiv s_{j,b} - s_{j,a} \not \equiv 0 \bmod p_{i_c}
\end{displaymath}
and hence also in this case $\llbracket q_j \rrbracket^{s_{j,b}-z}$ will not split into further cycles by Lemma~\ref{lemmasplit}. Moreover we have
\begin{displaymath}
s_{j,a} - z \equiv s_{j,a} - s_{j,a} \equiv 0 \bmod q_j
\end{displaymath}
and hence $\llbracket q_j \rrbracket^{s_{j,a}-z}$ will split into $q_j$ fixed points by Lemma~\ref{lemmasplit}. Finally we have
\begin{displaymath}
s_{j,a-3} - z \equiv s_{j,a-3} - s_{j,a} \equiv 1 - 1 \equiv 0 \bmod p_{i_{a-3}}
\end{displaymath}
and for all $b \in [1,3] \setminus \{a-3\}$ we have
\begin{displaymath}
s_{j,a-3} - z \equiv s_{j,a-3} - s_{j,a} \equiv 0 - s_{j,a} \not \equiv 0 \bmod p_{i_b}
\end{displaymath}
and hence $\llbracket q_j \rrbracket^{s_{j,a-3}-z}$ will split into $p_{i_{a-3}}$ cycles by Lemma~\ref{lemmasplit}. This gives us a total of
\begin{displaymath}
4 + q_j + p_{i_{a-3}} < q_j + 2 + \sum_{i \in C_j} p_i
\end{displaymath}
cycles.
\qed

\begin{claim}\label{claimexactlyonea}
For all $j \in [1,m]$ there is exactly one $a \in [1,3]$ such that $x \equiv 1 \bmod p_{i_a}$ and $x \equiv 0 \bmod p_{i_b}$ for all $b \in [1,3] \setminus \{a\}$ in which $C_j = \{i_1,i_2,i_3\}$ with $i_1 < i_2 < i_3$.
\end{claim}
By Claim~\ref{claimsplitupperboundcycles} we find that summing up the largest possible amount of splitting cycles gives us
\begin{displaymath}
C(\tau,\pi^x) \geq N - \sum_{j=1}^m (q_j + 2 + \sum_{i \in C_j} p_i) = k
\end{displaymath}
and hence $C(\tau,\pi^x) = k$. Thus for all $j \in [1,m]$ the only possibility for $x$ is to satisfy $x \equiv s_{j,a} \bmod q_j$ for exactly one $a \in [1,3]$ which implies $x \equiv 1 \bmod p_{i_a}$ and $x \equiv 0 \bmod p_{i_b}$ for all $b \in [1,3] \setminus \{a\}$ as claimed.
\qed

Now we will show $|X' \cap C_j| = 1$ for all $j \in [1,m]$. Let $C_j = \{i_1,i_2,i_3\}$ with $i_1 < i_2 < i_3$. Then by Claim~\ref{claimexactlyonea} there is exactly one $a \in [1,3]$ such that $x \equiv 1 \bmod p_{i_a}$ and $x \equiv 0 \bmod p_{i_b}$ for all $b \in [1,3] \setminus \{a\}$. Thus we have $i_a \in X'$ and $i_b \notin X'$ for all $b \in [1,3] \setminus \{a\}$ which finally gives us $|X' \cap C_j| = 1$.

Vice versa suppose there is a subset $X' \subseteq X$ such that $|X' \cap C_j| = 1$ for all $j \in [1,m]$. Then we define $x$ as the smallest positive integer satisfying
\begin{displaymath}
x \equiv \begin{cases}
1 \bmod p_i & \text{if } i \in X'\\
0 \bmod p_i & \text{if } i \notin X'
\end{cases}
\end{displaymath}
for all $i \in [1,n]$. Then we obtain for all $j \in [1,m]$ and $i \in [1,n]$
\begin{displaymath}
x \equiv \begin{cases}
1 \bmod p_i & \text{if } i \in X' \cap C_j\\
0 \bmod p_i & \text{if } i \in C_j \setminus X'
\end{cases}
\end{displaymath}
from which it follows that $x \equiv s_{j,a} \bmod q_j$ where $a$ is the unique element in $X' \cap C_j$. Then $\tau_j\pi_j^{-x}$ splits into exactly $q_j + 2 + \sum_{i \in C_j} p_i$ cycles by Claim~\ref{claimsplitupperboundcycles} for all $j \in [1,m]$ which gives us
\begin{displaymath}
C(\tau,\pi^x) = N - \sum_{j=1}^m (q_j + 2 + \sum_{i \in C_j} p_i) = k.
\end{displaymath}
This shows the theorem.
\end{proof}

\subsection{$l_\infty$ Distance}
\subsubsection{General Case}
\begin{lemma}\label{pkinftyleqk}
Let $p \geq 5$ be an odd prime and $k \geq 2$ be a non-negative integer. Define
\begin{displaymath}
\delta = (1, k+1, 2k+1, \dots, \frac{p-1}{2}k+1, \frac{p-1}{2}k, \frac{p-3}{2}k, \frac{p-5}{2}k, \dots, k) \in S_{\frac{p-1}{2}k+1}
\end{displaymath}
in which $\delta$ is a cycle of length $p$. Then $l_\infty((\delta, \mathrm{id}), (\delta, \delta)^x) \leq k$ if and only if $x \equiv 0,1 \bmod p$.
\end{lemma}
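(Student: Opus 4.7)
The plan is to reduce both $l_\infty$ conditions to a combinatorial statement about the shift $x \bmod p$, exploiting the explicit zig-zag structure of $\delta$. Write the $p$ non-fixed points of $\delta$ as $a_0, a_1, \ldots, a_{p-1}$ with $a_j^\delta = a_{(j+1) \bmod p}$, so that
\[
a_j = jk+1 \text{ for } 0 \le j \le \tfrac{p-1}{2}, \qquad a_j = (p-j)k \text{ for } \tfrac{p+1}{2} \le j \le p-1,
\]
and therefore $a_j^{\delta^x} = a_{(j+x) \bmod p}$. Every fixed point of $\delta$ is also fixed by $\delta^x$, so it contributes $0$ to every $l_\infty$ term and may be ignored.

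Next, since $(\delta, \mathrm{id})$ and $(\delta, \delta)^x = (\delta^x, \delta^x)$ live in $S_n^2$ and act on disjoint halves of $\{1,\ldots,2n\}$ with $n = \frac{p-1}{2}k+1$, the $l_\infty$ distance splits as the maximum over its two coordinates, so the hypothesis is equivalent to $l_\infty(\delta, \delta^x) \le k$ and $l_\infty(\mathrm{id}, \delta^x) \le k$ holding simultaneously. In the $a_j$-notation these read $|a_{j+1} - a_{j+x}| \le k$ and $|a_j - a_{j+x}| \le k$ for every $j$; reindexing the first by $j \mapsto j-1$, the two conjoined conditions are equivalent to $f(x-1) \le k$ and $f(x) \le k$, where $f(y) := \max_j |a_j - a_{(j+y) \bmod p}|$.

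The core step is to prove that $f(y) \le k$ holds if and only if $y \equiv -1, 0, 1 \pmod p$. For the easy direction, $f(0) = 0$; and for $y = \pm 1$ the consecutive differences along the cycle take only the values $k$ (on either straight segment), $1$ (at the turn between $a_{(p-1)/2}$ and $a_{(p+1)/2}$), and $k-1$ (at the wrap between $a_{p-1}$ and $a_0$), each bounded by $k$. For the converse direction, a single witness $j=0$ suffices: $a_0 = 1$, and for $2 \le y \le (p-1)/2$ we have $|a_0 - a_y| = yk \ge 2k > k$, while for $(p+1)/2 \le y \le p-2$ we have $p - y \ge 2$ and hence $|a_0 - a_y| = (p-y)k - 1 \ge 2k - 1 > k$ because $k \ge 2$.

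Finally, since $p \ge 5$ the three-element set $\{-1, 0, 1\}$ does not wrap around modulo $p$, so the only pairs of consecutive integers both in this set are $(-1, 0)$ and $(0, 1)$. Combined with the requirement that $x-1$ and $x$ both be good shifts, this forces $x \equiv 0$ or $x \equiv 1 \pmod p$, as claimed. The main technical obstacle is really just the initial unpacking -- recognising that the two $l_\infty$-coordinate inequalities translate into two shift conditions on $x-1$ and $x$, and that the three-element window of good shifts pins down exactly $\{0,1\}$; thereafter the case analysis of consecutive differences is routine, with $j=0$ alone already defeating every candidate outside $\{-1,0,1\}$.
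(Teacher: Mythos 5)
Your proof is correct. The underlying computations are the same as the paper's (the witness point $a_0 = 1$ and the bound on consecutive differences along the zig-zag cycle), but the organization is different in a pleasant way. The paper proves the hard direction directly by case analysis on $a \in [2,p-1]$, using the second coordinate (comparison against $\mathrm{id}$) at the point $1$ for $a \in [2,p-2]$ and then switching to the first coordinate (comparison against $\delta$) at the point $k+1$ to handle the residual case $a = p-1$. You instead decouple the two coordinates once and for all into the symmetric shift conditions $f(x-1) \leq k$ and $f(x) \leq k$, characterize the set of good shifts as exactly $\{-1,0,1\} \pmod p$, and then intersect. This buys a cleaner structure: the case $a = p-1$ is absorbed automatically (since $f(p-2) > k$), so you never need a second witness point, and the final step reduces to the elementary observation that for $p \geq 5$ the only pairs of consecutive residues inside $\{-1,0,1\}$ are $(-1,0)$ and $(0,1)$. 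The paper's approach is more direct; yours makes the role of the two coordinates and the "window of width $3$" more transparent.
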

\begin{proof}
One direction is clear since the difference of two consecutive numbers of $\delta$ is at most $k$. Now suppose $l_\infty((\delta, \mathrm{id}), (\delta, \delta)^x) \leq k$. It suffices to show for all $a \in [2,p-1]$ if $x \equiv a \bmod p$ then $l_\infty((\delta, \mathrm{id}), (\delta, \delta)^x) > k$. In the case $2 \leq a \leq \frac{p-1}{2}$ we have $(1,1)^{(\delta, \mathrm{id})} = (k+1,1)$ and $(1,1)^{(\delta, \delta)^a} = (ak+1,ak+1)$. Therefore the distance is at least $ak+1-1 = ak \geq 2k$. In the case $\frac{p+1}{2} \leq a \leq p-2$ we have $(1,1)^{(\delta, \delta)^a} = (k(p-a),k(p-a))$. In this case the distance is at least $k(p-a) - 1 \geq k(p-(p-2)) - 1 = 2k-1$. In the case $a=p-1$ we have $(k+1,k+1)^{(\delta, \mathrm{id})} = (2k+1,k+1)$ and $(k+1,k+1)^{(\delta, \delta)^{p-1}} = (1,1)$ which gives us a distance of $2k+1-1 = 2k$.
\end{proof}

\begin{theorem}
The {\sc Subgroup distance problem} regarding the $l_\infty$ distance is ${\sf NP}$-complete when the input group is cyclic.
\end{theorem}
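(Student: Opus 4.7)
The plan is a log-space reduction from \textsf{3-SAT} or, more conveniently, from an NP-hard variant such as \textsf{Not-All-Equal 3SAT} (whose forbidden assignments per clause form a symmetric pair). The overall blueprint follows that of the Hamming-distance reduction, with the plain cycles $\llbracket p_i\rrbracket$ replaced by the $l_\infty$-gadgets provided by Lemma~\ref{pkinftyleqk}. Assign an odd prime $p_i\ge 5$ to each variable $x_i$, form $q_j=\prod_{i\in I_j}p_i$ for each clause $c_j$, and build $\pi$ and $\tau$ in $S_N$ as direct products of one block per variable and one block per clause; the cyclic input group is $\langle\pi\rangle$ and the distance bound is set to the constant $k$ used in the gadgets.

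For the variable block corresponding to $x_i$, place $(\delta_{p_i},\mathrm{id})$ in $\tau$ and $(\delta_{p_i},\delta_{p_i})$ in $\pi$, where $\delta_{p_i}$ is the gadget of Lemma~\ref{pkinftyleqk} at $p_i$. Because $l_\infty$ is the maximum over coordinates, the product of all variable blocks forces any admissible exponent $z$ to satisfy the lemma's condition in every block, i.e.\ $z\equiv 0,1\bmod p_i$ for all $i$. Thus $z$ must encode a truth assignment of the $x_i$, and each variable block contributes exactly $k$ to $l_\infty(\tau,\pi^z)$ for any such $z$, which is what will be budgeted.

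For each clause $c_j$ I would build a clause block by applying the same zigzag construction at the odd composite modulus $q_j$: the proof of Lemma~\ref{pkinftyleqk} never uses primality of $p$, only that $p$ is odd and $\ge 5$, and so transfers verbatim to $q_j$. By choosing the zigzag spacing $k'_j$ smaller than $k$ (so that the single-coordinate accepting window has radius $\lfloor k/k'_j\rfloor$ around a shift) and shifting the target permutation via CRT, the accepting set mod $q_j$ becomes a cyclic interval of tunable length and centre. Tuning $k'_j$ and the shift so that the interval excludes precisely the CRT-image of the unsatisfying assignment(s) of $c_j$ --- which for \textsf{Not-All-Equal 3SAT} is the two-element cyclic interval $\{0,1\}\bmod q_j$ --- realises the clause constraint. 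Correctness in both directions then mirrors the Hamming proof: a satisfying assignment CRT-combines to an exponent $z$ achieving $l_\infty(\tau,\pi^z)\le k$ in every block, and conversely any witness $z$ is forced by the variable blocks to encode a truth assignment and by each clause block to satisfy the clause.

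The main obstacle is the clause-gadget design. Because $l_\infty$ is AND-like (a maximum), the natural accepting sets of lemma-style gadgets are cyclic intervals that can only be intersected, not unioned, so the choice of source problem is important: \textsf{Not-All-Equal 3SAT} is attractive precisely because its forbidden set $\{0,1\}\bmod q_j$ is already an interval of length $2$, so a single widened zigzag gadget can carve out the satisfying set. The remaining technical juggling is keeping the spacings $k'_j$ integer-valued and the overall $k$ polynomial in the input despite varying $q_j$; this can be handled by taking $k$ to be a suitable common multiple of the quantities $(q_j-3)/2$ (which stay polynomial in the input by the prime-number theorem, as in Lemma~\ref{lemmaprimelargeenough}), and verifying in a case analysis patterned on the proof of Lemma~\ref{pkinftyleqk} that every off-target residue produces an $l_\infty$-displacement strictly greater than $k$ in some coordinate.
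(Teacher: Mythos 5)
Your construction takes a genuinely different route from the paper. The paper reduces from \textsf{3-SAT} and builds, for each clause $c_j$, a gadget in $S_{k+2}$ based on three \emph{commuting} cycles $\alpha_j,\beta_j,\gamma_j$ whose product is a $q_j$-cycle with eight specially pinned points; the single unsatisfying assignment $w_j$ is then detected via the transposition $(w_j,k+2)$, which produces a distance of exactly $k+1$ at the point $w_j$ when and only when $z$ encodes $\sigma_j$. You instead reduce from \textsf{Not-All-Equal 3SAT} so that the forbidden residues form a pair, and you widen the zigzag of Lemma~\ref{pkinftyleqk} to make its accepting set a long cyclic interval whose complement is that pair. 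Conceptually this is attractive (it reuses the existing $l_\infty$ gadget rather than inventing a new one), and a direct computation confirms the shape you are relying on: for the zigzag $\delta$ of length $q$ with spacing $k'$ one has $l_\infty(\delta^c,\delta^x)=k'\cdot\min\{d,q-d\}$ with $d=(c-x)\bmod q$, so the single-coordinate gadget $\delta^c$ vs.\ $\delta^x$ does accept exactly the interval $[c-\lfloor k/k'\rfloor,\,c+\lfloor k/k'\rfloor]$.

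However, there are two genuine gaps. First, ``$k$ a common multiple of the quantities $(q_j-3)/2$'' does \emph{not} stay polynomial: each $(q_j-3)/2$ is polynomial in the input, but their least common multiple can grow exponentially in the number of clauses, so the gadgets of size $\Theta(p_i k)$ and $\Theta(q_j k'_j)$ would blow up. What you actually need is only the weaker condition that for every $j$ some integer $k'_j\geq 2$ lies in the interval $\bigl(\tfrac{2k}{q_j-1},\tfrac{2k}{q_j-3}\bigr]$, i.e.\ that $\lfloor k/k'_j\rfloor=\tfrac{q_j-3}{2}$; this holds already for $k\geq\max_j\lceil(q_j-1)(q_j-3)/4\rceil$, which is polynomial. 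Second, your claim that the forbidden residues CRT-combine to the interval $\{0,1\}\bmod q_j$ is only correct for clauses with no negated literals. For a general NAE clause the two forbidden residues are $\{w,\,q_j+1-w\}$ for some $w$, symmetric about $(q_j+1)/2$ but generally not adjacent, so a single cyclic interval cannot exclude them while including all six satisfying residues. You would either have to restrict to \emph{positive} NAE-3SAT (which is still {\sf NP}-hard, being 2-colourability of 3-uniform hypergraphs) and argue that explicitly, or adopt the dual-prime device the paper uses in its $l_p$/Kendall/Ulam reductions, at which point an extra coupling block at modulus $p_i\bar p_i$ is needed for each variable and the construction is no longer as simple as sketched. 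With those two repairs and the case analysis you already flagged as necessary, the approach appears viable, but as stated the reduction is neither polynomial-size nor correct on negated clauses.
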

\begin{proof}
We give a log-space reduction from \textsc{3-SAT}. Let $X = \{x_1,\dots,x_n\}$ be a set of variables and let $C = \{c_1,\dots,c_m\}$ be a set of clauses over $X$ where $c_j$ contains exactly $3$ different literals for all $j \in [1,m]$. W.l.o.g. we can assume that no clause contains a positive and a negative literal regarding the same variable. For $j=1,\dots,m$ we define $I_j \subseteq [1,n]$ as the set of all indices $i$ such that $c_j$ contains $x_i$ or $\bar{x}_i$. Let $p_1,\dots,p_n$ be the first $n$ odd primes with $p_1 \geq 5$. Moreover let $k = p_n^3$, $q_j = \prod_{i \in I_j} p_i$ for $j=1,\dots,m$ and let $N = \sum_{i=1}^n ((p_i-1)k+2) + m(k+2)$. We will work with the group $G \leq S_N$ in which
\begin{displaymath}
G = \prod_{i=1}^n V_i \times \prod_{j=1}^m U_j
\end{displaymath}
with $V_i = S_{\frac{p_i-1}{2}k+1}^2$ and $U_j = S_{k+2}$. For $i=1,\dots,n$ we define the cycle $\delta_i$ of length $p_i$ by
\begin{displaymath}
\delta_i = (1, k+1, 2k+1, \dots, \frac{p_i-1}{2}k+1, \frac{p_i-1}{2}k, \frac{p_i-3}{2}k, \frac{p_i-5}{2}k, \dots, k).
\end{displaymath}
Now we define the input group elements as
\begin{align*}
\tau&= (\zeta_1,\dots,\zeta_n,\mu_1,\dots,\mu_m)\\
\pi&= (\eta_1,\dots,\eta_n,\lambda_1,\dots,\lambda_m)
\end{align*}
with $\zeta_i = (\delta_i, \mathrm{id})$ and $\eta_i = (\delta_i, \delta_i)$ for $i \in [1,n]$. To define $\lambda_j$ and $\mu_j$ for $j \in [1,m]$ we first define some auxiliary permutations. Let $j \in [1,m]$ and let $d < e < f \in I_j$ be the indices of the variables that occur (negated or unnegated) in this clause. Then we define permutations that do not need to be constructed explicitly:
\begin{align*}
\alpha_j&= \prod_{r=1}^{p_f} \alpha_{j,r} && \beta_j = \prod_{r=1}^{p_f} \beta_{j,r} && \gamma_j = \prod_{s=1}^{p_e} \gamma_{j,s}\\
\alpha_{j,r}&= \prod_{s=1}^{p_e} \alpha_{j,r,s} && \beta_{j,r} = \prod_{t=1}^{p_d} \beta_{j,r,t} && \gamma_{j,s} = \prod_{t=1}^{p_d} \gamma_{j,s,t}\\
\alpha_{j,r,s}&= (\alpha_{j,r,s,1},\dots,\alpha_{j,r,s,p_d}) && \beta_{j,r,t} = (\beta_{j,r,t,1},\dots,\beta_{j,r,t,p_e}) && \gamma_{j,s,t} = (\gamma_{j,s,t,1},\dots,\gamma_{j,s,t,p_f})
\end{align*}
with $\alpha_{j,r,s,t} \in [1,q_j]$ and $\alpha_{j,r,s,t} \neq \alpha_{j,r',s',t'}$ for $(r,s,t) \neq (r',s',t')$ and the constraint
\begin{equation}\label{alphabetagammaconstraint}
\alpha_{j,r,s,t} = \beta_{j,r,t,s} = \gamma_{j,s,t,r}
\end{equation}
for $r \in [1,p_f], s \in [1,p_e]$ and $t \in [1,p_d]$. Note that $\ord(\alpha_j)=p_d,\ord(\beta_j)=p_e$ and $\ord(\gamma_j)=p_f$. We fix the following $8$ values:
\begin{equation}\label{fixedvalues}
\begin{split}
\alpha_{j,1,1,2}&= 1\\
\alpha_{j,1,1,1}&= 2\\
\alpha_{j,1,p_e,2}&= 3\\
\alpha_{j,p_f,1,2}&= 4\\
\alpha_{j,p_f,p_e,2}&= 5\\
\alpha_{j,p_f,1,1}&= 6\\
\alpha_{j,1,p_e,1}&= 7\\
\alpha_{j,p_f,p_e,1}&= 8.
\end{split}
\end{equation}
In the clause $c_j$ there is exactly one truth assignment of the variables occuring in $c_j$ that does not satisfy this clause. Let $\sigma_j$ denote this partial truth assignment. We define
\begin{displaymath}
w_j = \begin{cases}
1 & \text{~~~~if } \sigma_j(x_d)=0, \sigma_j(x_e)=0 \text{ and } \sigma_j(x_f)=0\\
2 & \text{~~~~if } \sigma_j(x_d)=1, \sigma_j(x_e)=0 \text{ and } \sigma_j(x_f)=0\\
3 & \text{~~~~if } \sigma_j(x_d)=0, \sigma_j(x_e)=1 \text{ and } \sigma_j(x_f)=0\\
4 & \text{~~~~if } \sigma_j(x_d)=0, \sigma_j(x_e)=0 \text{ and } \sigma_j(x_f)=1\\
5 & \text{~~~~if } \sigma_j(x_d)=0, \sigma_j(x_e)=1 \text{ and } \sigma_j(x_f)=1\\
6 & \text{~~~~if } \sigma_j(x_d)=1, \sigma_j(x_e)=0 \text{ and } \sigma_j(x_f)=1\\
7 & \text{~~~~if } \sigma_j(x_d)=1, \sigma_j(x_e)=1 \text{ and } \sigma_j(x_f)=0\\
8 & \text{~~~~if } \sigma_j(x_d)=1, \sigma_j(x_e)=1 \text{ and } \sigma_j(x_f)=1
\end{cases}
\end{displaymath}
and finally we define $\lambda_j = \alpha_j\beta_j\gamma_j(k+2,k)$ and $\mu_j = (w_j,k+2)$. Now we show that we can construct $\alpha_j\beta_j\gamma_j$ in log-space.
\begin{claim}\label{claimjalphabetagammecommute}
$\alpha_j, \beta_j, \gamma_j$ pairwise commute.
\end{claim}
In the following we make use of Constraint~\eqref{alphabetagammaconstraint} several times without explicit mentioning. We have
\begin{align*}
\alpha_{j,r,s,t}^{\beta_j\alpha_j}=\beta_{j,r,t,s}^{\beta_j\alpha_j}&= \begin{cases}
\mathrlap{\beta_{j,r,t,1}^{\alpha_j}}\hphantom{\alpha_{j,r,s+1,t+1}} = \mathrlap{\alpha_{j,r,1,t}^{\alpha_j}}\hphantom{\beta_{j,r,t+1,s+1}} & \text{if } s=p_e\\
\mathrlap{\beta_{j,r,t,s+1}^{\alpha_j}}\hphantom{\alpha_{j,r,s+1,t+1}} = \mathrlap{\alpha_{j,r,s+1,t}^{\alpha_j}}\hphantom{\beta_{j,r,t+1,s+1}} & \text{if } 1 \leq s < p_e
\end{cases}\\
&= \begin{cases}
\mathrlap{\alpha_{j,r,1,1}}\hphantom{\alpha_{j,r,s+1,t+1}} = \beta_{j,r,1,1} & \text{if } t=p_d, s=p_e\\
\mathrlap{\alpha_{j,r,1,t+1}}\hphantom{\alpha_{j,r,s+1,t+1}} = \beta_{j,r,t+1,1} & \text{if } 1 \leq t < p_d, s=p_e\\
\mathrlap{\alpha_{j,r,s+1,1}}\hphantom{\alpha_{j,r,s+1,t+1}} = \beta_{j,r,1,s+1} & \text{if } t=p_d, 1 \leq s < p_e\\
\alpha_{j,r,s+1,t+1} = \beta_{j,r,t+1,s+1} & \text{if } 1 \leq t < p_d, 1 \leq s < p_e
\end{cases}\\
&=\begin{cases}
\mathrlap{\beta_{j,r,1,s}^{\beta_j}}\hphantom{\alpha_{j,r,s+1,t+1}} = \mathrlap{\alpha_{j,r,s,1}^{\beta_j}}\hphantom{\beta_{j,r,t+1,s+1}} & \text{if } t=p_d\\
\mathrlap{\beta_{j,r,t+1,s}^{\beta_j}}\hphantom{\alpha_{j,r,s+1,t+1}} = \mathrlap{\alpha_{j,r,s,t+1}^{\beta_j}}\hphantom{\beta_{j,r,t+1,s+1}} & \text{if } 1 \leq t < p_d
\end{cases}\\
&=\alpha_{j,r,s,t}^{\alpha_j\beta_j}.
\end{align*}
Analogously we obtain that $\alpha_j,\gamma_j$ and $\beta_j,\gamma_j$ commute.
\qed

By Claim~\ref{claimjalphabetagammecommute} we have $\ord(\alpha_j\beta_j\gamma_j) = p_dp_ep_f = q_j$ from which it follows that $\alpha_j\beta_j\gamma_j$ is a cycle of length $q_j$. Now we give a mapping to construct $\alpha_j\beta_j\gamma_j$ in log-space:
\begin{align*}
\alpha_{j,r,s,t}^{\alpha_j\beta_j\gamma_j}&= \begin{cases}
\mathrlap{\alpha_{j,r,s,1}^{\beta_j\gamma_j}}\hphantom{\gamma_{j,s+1,t+1,r+1}} = \mathrlap{\beta_{j,r,1,s}^{\beta_j\gamma_j}}\hphantom{\alpha_{j,r+1,s+1,t+1}} & \text{if } t=p_d\\
\mathrlap{\alpha_{j,r,s,t+1}^{\beta_j\gamma_j}}\hphantom{\gamma_{j,s+1,t+1,r+1}} = \mathrlap{\beta_{j,r,t+1,s}^{\beta_j\gamma_j}}\hphantom{\alpha_{j,r+1,s+1,t+1}} & \text{if } 1 \leq t < p_d
\end{cases}\\
&= \begin{cases}
\mathrlap{\beta_{j,r,1,1}^{\gamma_j}}\hphantom{\gamma_{j,s+1,t+1,r+1}} = \mathrlap{\gamma_{j,1,1,r}^{\gamma_j}}\hphantom{\alpha_{j,r+1,s+1,t+1}} & \text{if } t=p_d, s=p_e\\
\mathrlap{\beta_{j,r,1,s+1}^{\gamma_j}}\hphantom{\gamma_{j,s+1,t+1,r+1}} = \mathrlap{\gamma_{j,s+1,1,r}^{\gamma_j}}\hphantom{\alpha_{j,r+1,s+1,t+1}} & \text{if } t=p_d, 1 \leq s < p_e\\
\mathrlap{\beta_{j,r,t+1,1}^{\gamma_j}}\hphantom{\gamma_{j,s+1,t+1,r+1}} = \mathrlap{\gamma_{j,1,t+1,r}^{\gamma_j}}\hphantom{\alpha_{j,r+1,s+1,t+1}} & \text{if } 1 \leq t < p_d, s=p_e\\
\mathrlap{\beta_{j,r,t+1,s+1}^{\gamma_j}}\hphantom{\gamma_{j,s+1,t+1,r+1}} = \mathrlap{\gamma_{j,s+1,t+1,r}^{\gamma_j}}\hphantom{\alpha_{j,r+1,s+1,t+1}} & \text{if } 1 \leq t < p_d, 1 \leq s < p_e
\end{cases}\\
&= \begin{cases}
\mathrlap{\gamma_{j,1,1,1}}\hphantom{\gamma_{j,s+1,t+1,r+1}} = \alpha_{j,1,1,1} & \text{if } t=p_d, s=p_e, r=p_f\\
\mathrlap{\gamma_{j,1,1,r+1}}\hphantom{\gamma_{j,s+1,t+1,r+1}} = \alpha_{j,r+1,1,1} & \text{if } t=p_d, s=p_e, 1 \leq r < p_f\\
\mathrlap{\gamma_{j,s+1,1,1}}\hphantom{\gamma_{j,s+1,t+1,r+1}} = \alpha_{j,1,s+1,1} & \text{if } t=p_d, 1 \leq s < p_e, r=p_f\\
\mathrlap{\gamma_{j,s+1,1,r+1}}\hphantom{\gamma_{j,s+1,t+1,r+1}} = \alpha_{j,r+1,s+1,1} & \text{if } t=p_d, 1 \leq s < p_e, 1 \leq r < p_f\\
\mathrlap{\gamma_{j,1,t+1,1}}\hphantom{\gamma_{j,s+1,t+1,r+1}} = \alpha_{j,1,1,t+1} & \text{if } 1 \leq t < p_d, s=p_e, r=p_f\\
\mathrlap{\gamma_{j,1,t+1,r+1}}\hphantom{\gamma_{j,s+1,t+1,r+1}} = \alpha_{j,r+1,1,t+1} & \text{if } 1 \leq t < p_d, s=p_e, 1 \leq r < p_f\\
\mathrlap{\gamma_{j,s+1,t+1,1}}\hphantom{\gamma_{j,s+1,t+1,r+1}} = \alpha_{j,1,s+1,t+1} & \text{if } 1 \leq t < p_d, 1 \leq s < p_e, r=p_f\\
\gamma_{j,s+1,t+1,r+1} = \alpha_{j,r+1,s+1,t+1} & \text{if } 1 \leq t < p_d, 1 \leq s < p_e, 1 \leq r < p_f.
\end{cases}
\end{align*}
Because $\alpha_j\beta_j\gamma_j$ is a cycle we can start with an arbitrary triple $(r,s,t)$ and write in the output the numbers from $9$ up to $q_j$. When we obtain a triple where we already assigned a fixed value (see \eqref{fixedvalues}) we write in the output that fixed value instead. By this procedure we clearly can write $\alpha_j\beta_j\gamma_j$ in the output in log-space. Moreover $\alpha_j\beta_j\gamma_j$ evaluates as follows
\begin{equation}\label{equationevaluationjalphabetagamma}
\begin{split}
1^{\alpha_j^0\beta_j^0\gamma_j^0}&= 1\\
2^{\alpha_j^1\beta_j^0\gamma_j^0}&= 1\\
3^{\alpha_j^0\beta_j^1\gamma_j^0}&= 1\\
4^{\alpha_j^0\beta_j^0\gamma_j^1}&= 1\\
5^{\alpha_j^0\beta_j^1\gamma_j^1}&= 1\\
6^{\alpha_j^1\beta_j^0\gamma_j^1}&= 1\\
7^{\alpha_j^1\beta_j^1\gamma_j^0}&= 1\\
8^{\alpha_j^1\beta_j^1\gamma_j^1}&= 1
\end{split}
\end{equation}
since
\begin{align*}
1^{\alpha_j^0\beta_j^0\gamma_j^0}&=\mathrlap{1^{\mathrm{id}}}\hphantom{8^{\alpha_j\beta_j\gamma_j}}= 1\\
2^{\alpha_j^1\beta_j^0\gamma_j^0}&=\mathrlap{2^{\alpha_j}}\hphantom{8^{\alpha_j\beta_j\gamma_j}} = \mathrlap{\alpha_{j,1,1,1}^{\alpha_j}}\hphantom{\alpha_{j,p_f,p_e,1}^{\alpha_j\beta_j\gamma_j}} = \mathrlap{\alpha_{j,1,1,2}}\hphantom{\alpha_{j,p_f,p_e,2}^{\beta_j\gamma_j}} = 1\\
3^{\alpha_j^0\beta_j^1\gamma_j^0}&=\mathrlap{3^{\beta_j}}\hphantom{8^{\alpha_j\beta_j\gamma_j}} = \mathrlap{\alpha_{j,1,p_e,2}^{\beta_j}}\hphantom{\alpha_{j,p_f,p_e,1}^{\alpha_j\beta_j\gamma_j}} = \mathrlap{\beta_{j,1,2,p_e}^{\beta_j}}\hphantom{\alpha_{j,p_f,p_e,2}^{\beta_j\gamma_j}} = \mathrlap{\beta_{j,1,2,1}}\hphantom{\beta_{j,p_f,2,p_e}^{\beta_j\gamma_j}} = \mathrlap{\alpha_{j,1,1,2}}\hphantom{\beta_{j,p_f,2,1}^{\gamma_j}} = 1\\
4^{\alpha_j^0\beta_j^0\gamma_j^1}&=\mathrlap{4^{\gamma_j}}\hphantom{8^{\alpha_j\beta_j\gamma_j}} = \mathrlap{\alpha_{j,p_f,1,2}^{\gamma_j}}\hphantom{\alpha_{j,p_f,p_e,1}^{\alpha_j\beta_j\gamma_j}} = \mathrlap{\gamma_{j,1,2,p_f}^{\gamma_j}}\hphantom{\alpha_{j,p_f,p_e,2}^{\beta_j\gamma_j}} = \mathrlap{\gamma_{j,1,2,1}}\hphantom{\beta_{j,p_f,2,p_e}^{\beta_j\gamma_j}} = \mathrlap{\alpha_{j,1,1,2}}\hphantom{\beta_{j,p_f,2,1}^{\gamma_j}} = 1\\
5^{\alpha_j^0\beta_j^1\gamma_j^1}&=\mathrlap{5^{\beta_j\gamma_j}}\hphantom{8^{\alpha_j\beta_j\gamma_j}} = \mathrlap{\alpha_{j,p_f,p_e,2}^{\beta_j\gamma_j}}\hphantom{\alpha_{j,p_f,p_e,1}^{\alpha_j\beta_j\gamma_j}} = \mathrlap{\beta_{j,p_f,2,p_e}^{\beta_j\gamma_j}}\hphantom{\alpha_{j,p_f,p_e,2}^{\beta_j\gamma_j}} = \mathrlap{\beta_{j,p_f,2,1}^{\gamma_j}}\hphantom{\beta_{j,p_f,2,p_e}^{\beta_j\gamma_j}} = \mathrlap{\gamma_{j,1,2,p_f}^{\gamma_j}}\hphantom{\beta_{j,p_f,2,1}^{\gamma_j}} = \mathrlap{\gamma_{j,1,2,1}}\hphantom{\gamma_{j,1,2,p_f}^{\gamma_j}} = \alpha_{j,1,1,2} = 1\\
6^{\alpha_j^1\beta_j^0\gamma_j^1}&=\mathrlap{6^{\alpha_j\gamma_j}}\hphantom{8^{\alpha_j\beta_j\gamma_j}} = \mathrlap{\alpha_{j,p_f,1,1}^{\alpha_j\gamma_j}}\hphantom{\alpha_{j,p_f,p_e,1}^{\alpha_j\beta_j\gamma_j}} = \mathrlap{\alpha_{j,p_f,1,2}^{\gamma_j}}\hphantom{\alpha_{j,p_f,p_e,2}^{\beta_j\gamma_j}} = \mathrlap{\gamma_{j,1,2,p_f}^{\gamma_j}}\hphantom{\beta_{j,p_f,2,p_e}^{\beta_j\gamma_j}} = \mathrlap{\gamma_{j,1,2,1}}\hphantom{\beta_{j,p_f,2,1}^{\gamma_j}} = \mathrlap{\alpha_{j,1,1,2}}\hphantom{\gamma_{j,1,2,p_f}^{\gamma_j}} = 1\\
7^{\alpha_j^1\beta_j^1\gamma_j^0}& =\mathrlap{7^{\alpha_j\beta_j}}\hphantom{8^{\alpha_j\beta_j\gamma_j}} = \mathrlap{\alpha_{j,1,p_e,1}^{\alpha_j\beta_j}}\hphantom{\alpha_{j,p_f,p_e,1}^{\alpha_j\beta_j\gamma_j}} = \mathrlap{\alpha_{j,1,p_e,2}^{\beta_j}}\hphantom{\alpha_{j,p_f,p_e,2}^{\beta_j\gamma_j}} = \mathrlap{\beta_{j,1,2,p_e}^{\beta_j}}\hphantom{\beta_{j,p_f,2,p_e}^{\beta_j\gamma_j}} = \mathrlap{\beta_{j,1,2,1}}\hphantom{\beta_{j,p_f,2,1}^{\gamma_j}} = \mathrlap{\alpha_{j,1,1,2}}\hphantom{\gamma_{j,1,2,p_f}^{\gamma_j}} = 1\\
8^{\alpha_j^1\beta_j^1\gamma_j^1}&= 8^{\alpha_j\beta_j\gamma_j} = \alpha_{j,p_f,p_e,1}^{\alpha_j\beta_j\gamma_j} = \alpha_{j,p_f,p_e,2}^{\beta_j\gamma_j} = \beta_{j,p_f,2,p_e}^{\beta_j\gamma_j} = \beta_{j,p_f,2,1}^{\gamma_j} = \gamma_{j,1,2,p_f}^{\gamma_j} = \gamma_{j,1,2,1} = \alpha_{j,1,1,2} = 1.
\end{align*}
Now we will show there is a $z \in \mathbb{N}$ such that $l_\infty(\tau,\pi^z) \leq k$ if and only if $C$ is satisfiable. Suppose there is such a $z$. Consider the computations in $V_i$. By Lemma~\ref{pkinftyleqk} we have $l_\infty(\zeta_i,\eta_i^z) \leq k$ if and only if $z \equiv 0,1 \bmod p_i$. Now we consider the computations in $U_j$. We have $\lambda_j^z = (\alpha_j\beta_j\gamma_j)^z(k+2,k)^z$ and $\mu_j = (w_j,k+2)$. By Claim~\ref{claimjalphabetagammecommute} we have that $\alpha_j,\beta_j,\gamma_j$ pairwise commute which gives us $(\alpha_j\beta_j\gamma_j)^z = \alpha_j^z\beta_j^z\gamma_j^z$. Now let $z_1,z_2,z_3 \in \{0,1\}$ be such that $z_1 \equiv z \bmod p_d, z_2 \equiv z \bmod p_e$ and $z_3 \equiv z \bmod p_f$ in which $d < e < f \in I_j$. Such numbers exist since we have $z \equiv 0,1 \bmod p_i$ for all $i \in [1,n]$. Then we have $\alpha_j^z\beta_j^z\gamma_j^z = \alpha_j^{z_1}\beta_j^{z_2}\gamma_j^{z_3}$. By \eqref{equationevaluationjalphabetagamma} there is a $w \in [1,8]$ such that $w^{\alpha_j^{z_1}\beta_j^{z_2}\gamma_j^{z_3}} = 1$. If $w=w_j$ we get by $w^{\mu_j} = w_j^{\mu_j} = k+2$ a distance of $k+1$ contradicting $l_\infty(\tau,\pi^z) \leq k$. Therefore we have $w \neq w_j$. Since however $w_j$ is associated with a truth assignment that does not satisfy $c_j$ we obtain that $z$ encodes a truth assignment that satisfies $c_j$ for all $j \in [1,m]$. Therefore we obtain by
\begin{displaymath}
\sigma(x_i) = \begin{cases}
1 & \text{if } z \equiv 1 \bmod p_i\\
0 & \text{if } z \equiv 0 \bmod p_i
\end{cases}
\end{displaymath}
a satisfying truth assignment $\sigma$ for $C$.

Vice versa suppose $C$ is satisfiable and let $\sigma$ be a satisfying truth assignment. Let $z \in \mathbb{N}$ be the smallest non-negative integer satisfying
\begin{align*}
z &\equiv 1 \bmod 2\\
z &\equiv \begin{cases}
1 \bmod p_i & \text{if } \sigma(x_i)=1\\
0 \bmod p_i & \text{if } \sigma(x_i)=0.
\end{cases}
\end{align*}
Then clearly $l_\infty(\zeta_i,\eta_i^z) \leq k$ by Lemma~\ref{pkinftyleqk}. Now consider $\lambda_j^z$ and $\mu_j$. We have $(k+2)^{\lambda_j^z} = k$ and $(k+2)^{\mu_j} = w_j$ giving us the distance $k-w_j < k$. Moreover we have $k^{\lambda_j^z} = k+2$ and $k^{\mu_j} = k$ with the distance $k+2-k = 2$. Now consider $(\alpha_j\beta_j\gamma_j)^z = \alpha_j^z\beta_j^z\gamma_j^z = \alpha_j^{z_1}\beta_j^{z_2}\gamma_j^{z_3}$ for some $z_1,z_2,z_3 \in \{0,1\}$. By \eqref{equationevaluationjalphabetagamma} there is a $w \in [1,8]$ such that $w^{\alpha_j^{z_1}\beta_j^{z_2}\gamma_j^{z_3}} = 1$. Then we have $w \neq w_j$ because $\sigma$ is a satisfying truth assignment that satisfies $c_j$. Therefore we have $w_j^{\alpha_j^{z_1}\beta_j^{z_2}\gamma_j^{z_3}} \geq 2$ and $w_j^{\mu_j} = k+2$ giving us a distance of $k+2-w_j^{\alpha_j^{z_1}\beta_j^{z_2}\gamma_j^{z_3}} \leq k$. Moreover for all $y \in [1,q_j] \setminus \{w_j\}$ we have $y^{\alpha_j^{z_1}\beta_j^{z_2}\gamma_j^{z_3}} \in [1,q_j]$ and $y^{\mu_j} = y$ giving us a distance of at most $q_j-1 < k$. Finally $\{k+1\} \cup [q_j+1,k-1]$ are fixed-points in both $\lambda_j$ and $\mu_j$. Therefore we obtain $l_\infty(\mu_j,\lambda_j^z) \leq k$ and thus $l_\infty(\tau,\pi^z) \leq k$.
\end{proof}

\subsubsection{Fixed $k$}
\begin{lemma}\label{lemmak1atmost2solutions}
Let $\alpha,\beta \in S_n$ and $\alpha = \alpha_1 \cdots \alpha_d$ be the disjoint cycle decomposition of $\alpha$ and let $a_i$ denote the length of $\alpha_i$. Let $X = \{x \in \mathbb{Z} \mid l_\infty(\beta, \alpha^x) \leq 1\}$. Then for all $i \in [1,d]$ there are at most two numbers $0 \leq y_1,y_2 < a_i$ such that for all $x \in X$ the following holds: $x \equiv y_1 \bmod a_i$ or $x \equiv y_2 \bmod a_i$.
\end{lemma}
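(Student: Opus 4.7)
Fix $i$ and let $S_i = \act(\alpha_i)$ be the support of $\alpha_i$; note $|S_i| = a_i$. Since the cycles of $\alpha$ are disjoint, $j^{\alpha^x} = j^{\alpha_i^x}$ and in particular $j^{\alpha^x} \in S_i$ for every $j \in S_i$. The case $a_i = 1$ is trivial (there is only one residue mod $1$), so assume $a_i \geq 2$. I will argue by contradiction: suppose three pairwise distinct residues $y_1, y_2, y_3 \in \{0,\dots,a_i-1\}$ all occur as $x \bmod a_i$ for some $x \in X$.

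\textbf{Step 1: three distinct images.} For $j \in S_i$, the three values $j^{\alpha_i^{y_1}}, j^{\alpha_i^{y_2}}, j^{\alpha_i^{y_3}}$ are pairwise distinct. Indeed, $j^{\alpha_i^{y_a}} = j^{\alpha_i^{y_b}}$ would mean that $j$ is a fixed point of $\alpha_i^{y_a - y_b}$; but by Lemma~\ref{lemmasplit} the power $\alpha_i^{y_a - y_b}$ has a fixed point on $S_i$ only if $a_i \mid y_a - y_b$, which is excluded by $y_a \neq y_b$.

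\textbf{Step 2: forcing the three-point neighborhood into $S_i$.} By the definition of $X$, each of these three distinct values lies in the set $\{j^\beta - 1, j^\beta, j^\beta + 1\} \cap \{1,\dots,n\}$. Since the intersection has cardinality at most $3$, it must equal exactly $\{j^\beta-1, j^\beta, j^\beta+1\}$, which in turn forces $2 \leq j^\beta \leq n-1$ and
\begin{displaymath}
\{j^\beta - 1,\ j^\beta,\ j^\beta + 1\} \subseteq S_i \qquad \text{for every } j \in S_i.
\end{displaymath}
In particular, $\beta$ maps $S_i$ into $S_i$. As $\beta$ is injective and $S_i$ is finite, $\beta$ restricts to a \emph{bijection} of $S_i$.

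\textbf{Step 3: contradiction at the extremum.} Because $\beta|_{S_i}$ is a bijection of $S_i$, every element $m \in S_i$ can be written as $m = j^\beta$ for some $j \in S_i$, and therefore both $m-1$ and $m+1$ belong to $S_i$ as well. Taking $m$ to be the largest element of $S_i$ yields $m+1 \in S_i$ with $m+1 > m$, a contradiction.

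The main (and essentially only) subtlety is the edge effect at $\{1,n\}$: the pigeonhole in Step~2 needs the three candidate targets to be genuinely distinct integers in $\{1,\dots,n\}$, and this is precisely what rules out $j^\beta \in \{1,n\}$ and forces $j^\beta \in S_i$. Once that observation is in place, the closure property of $S_i$ under $\pm 1$ and the finiteness of $S_i$ close out the argument cleanly.
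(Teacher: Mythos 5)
Your proof is correct, and it takes a genuinely different route from the paper's. The paper argues directly and locally: it orients on the minimum $i_1 = \min\act(\alpha_i)$ and distinguishes two cases according to whether some element $i_h$ of the support satisfies $i_h^\beta = i_1$ (Case~1) or no such element exists (Case~2, in which some $i_g$ in the support must map outside the support under $\beta$). In each case the single constraint at the chosen point $i_h$ or $i_g$ already pins $i_h^{\alpha_i^x}$ (resp.\ $i_g^{\alpha_i^x}$) to at most two points of the support, hence at most two residues mod $a_i$. Your argument instead proceeds by contradiction and is global: from three distinct residues you derive, for \emph{every} $j$ in the support, that $j^{\alpha_i^{y_1}}, j^{\alpha_i^{y_2}}, j^{\alpha_i^{y_3}}$ are three distinct elements of $\act(\alpha_i)$ forced to exhaust $\{j^\beta-1, j^\beta, j^\beta+1\}$, so $\act(\alpha_i)$ is invariant under $\beta$ and closed under $m \mapsto m\pm 1$, which a nonempty finite subset of $\{1,\dots,n\}$ cannot be. Your version avoids the paper's case split and makes structurally clear \emph{why} three residues are impossible; the paper's version is slightly more economical in that it never needs $\beta$ to restrict to a bijection of the support. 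Both are valid; the bound of $2$ emerges in the paper from $|\{f-1,f+1\}\cap\act(\alpha_i)|\le 2$ or $|\{i_1,i_1+1\}|=2$, and in yours from ruling out $3$.
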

\begin{proof}
Let $i \in [1,d]$ and suppose $\alpha_i = (i_1,\dots,i_{a_i})$ where we assume w.l.o.g. $i_1 < i_j$ for all $j \in [2,a_i]$.

Case~1: There exists $1 \leq h \leq a_i$ such that $i_h^\beta = i_1$. Then for all $x \in X$ we have $i_h^{\alpha_i^x} \in \{i_1,i_1+1\}$ which can hold only for at most two different values in $[0,a_i-1]$.

Case~2: For all $1 \leq h \leq a_i$ we have $i_h^\beta \neq i_1$. Then there is a value $e \in [1,n] \setminus \{i_1,\dots,i_{a_i}\}$ such that $e^\beta = i_1$. Hence there is also a value $g \in [1,a_i]$ such that $i_g^\beta = f \not \in \{i_1,\dots,i_{a_i}\}$. Then for all $x \in X$ we have $i_g^{\alpha_i^x} \in \{f-1,f+1\} \cap \{i_1,\dots,i_{a_i}\}$ which can hold only for at most two different values in $[0,a_i-1]$.
\end{proof}

\begin{theorem}
Let $\alpha,\beta \in S_n$ be given in standard representation. Then it can be decided in {\sf NL} whether there is a number $z \in \mathbb{N}$ such that $l_\infty(\beta, \alpha^z) \leq 1$.
\end{theorem}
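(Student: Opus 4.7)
The plan is to reduce the decision problem to a 2-SAT instance of polynomial size and then invoke the fact that 2-SAT is in $\mathsf{NL}$. First, I would observe that the requirement $l_\infty(\beta, \alpha^z) \leq 1$ decomposes across the disjoint cycle decomposition of $\alpha$: for every $\alpha$-fixed point $k$ one has $k^{\alpha^z} = k$, so $|k^\beta - k| \leq 1$ must hold unconditionally (a single log-space sweep over $k$), while for every non-trivial cycle $\alpha_i$ of length $a_i$ the contribution to $l_\infty$ depends only on $z \bmod a_i$. By Lemma~\ref{lemmak1atmost2solutions}, the set of admissible residues modulo $a_i$ has at most two elements $\{y_{i,1}, y_{i,2}\}$. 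The problem is therefore to choose one $b_i \in \{y_{i,1}, y_{i,2}\}$ per cycle so that the system $z \equiv b_i \pmod{a_i}$ has a common solution; by the generalized Chinese Remainder Theorem, this is equivalent to the \emph{pairwise} condition $b_i \equiv b_j \pmod{\gcd(a_i, a_j)}$ for all $i \neq j$.

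Next I would extract the at most two admissible residues of each cycle explicitly in log-space by following the case distinction in the proof of Lemma~\ref{lemmak1atmost2solutions}. Concretely, for a cycle $\alpha_i = (i_1, \dots, i_{a_i})$ with $i_1$ minimal, one either locates the index $h$ with $i_h^\beta = i_1$ (if such an $h$ exists within the cycle) and reads off the at most two candidate residues by comparing $h$ to the cycle positions of $i_1$ and $i_1+1$, or, failing that, picks $g$ with $i_g^\beta = f \notin \{i_1, \dots, i_{a_i}\}$ and compares $g$ to the cycle positions of $f-1$ and $f+1$. Because the lemma only guarantees necessity, each candidate residue must then be verified by a log-space scan over the remaining elements of the cycle; cycles admitting no valid residue cause outright rejection.

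Finally, I would encode the resulting constraint system as 2-SAT and solve it in $\mathsf{NL}$. Introduce a Boolean variable $x_i$ for each cycle with two admissible residues, and a unit clause for each cycle with exactly one; for every ordered pair of cycles $(i,j)$ and every joint choice $(b_i, b_j)$ with $b_i \not\equiv b_j \pmod{\gcd(a_i, a_j)}$, add the binary clause forbidding that combination. The resulting implication graph on $O(d)$ literals and $O(d^2)$ edges is log-space uniform, since gcds, candidate residues and congruence checks are all log-space computable on demand; 2-SAT satisfiability is in $\mathsf{NL}$ (using $\mathsf{NL} = \mathsf{coNL}$ and reachability in the implication graph), and a satisfying assignment corresponds precisely to an existing $z$.

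The main conceptual obstacle is justifying that the pairwise compatibility conditions modulo $\gcd(a_i, a_j)$ are sufficient, not merely necessary, for a simultaneous CRT solution to exist, since only pairwise constraints can be expressed as 2-SAT; this is the standard prime-by-prime argument, but it is what lets the problem collapse below a genuinely global CSP. The technical challenge, by contrast, is mostly bookkeeping: verifying that cycle extraction, candidate generation, candidate validation, and the on-the-fly construction of the implication graph can all be carried out within a log-space work tape, which follows from standard facts about log-space computation on permutations in standard representation.
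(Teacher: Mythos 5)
Your proposal is correct, and it takes a genuinely different encoding from the paper's even though both reductions target 2-SAT and both rest on Lemma~\ref{lemmak1atmost2solutions}. You index Boolean variables by cycles directly and, for each ordered pair of cycles and each incompatible joint choice $(b_i,b_j)$ with $b_i \not\equiv b_j \pmod{\gcd(a_i,a_j)}$, add a 2-clause forbidding that combination; soundness rests on the generalized Chinese Remainder Theorem, i.e.\ the fact that pairwise gcd-compatibility of the chosen residues is \emph{sufficient} (not just necessary) for a simultaneous solution. The paper instead indexes variables by prime powers $p^d\leq n$: it picks a representative cycle $i_{p,d}$ with $\nu_p(a_{i_{p,d}})=d$, projects its admissible residue set $X_{i_{p,d}}$ to residues mod $p^d$ obtaining $Y_{p,d}$, and writes three families of constraints (the normalization $F_0$, the cross-power consistency $F'_p$, and the per-cycle linking formulae $F_i$), effectively carrying out the prime-by-prime CRT argument syntactically inside the formula rather than citing it as an external fact. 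Your cycle-pair encoding is more compact and conceptually transparent at the price of explicitly invoking the generalized CRT; the paper's prime-power encoding is heavier bookkeeping but keeps the correctness argument purely local. Both constructions give $O(n^2)$ clauses that are log-space uniform (gcds and residues on $O(\log n)$-bit numbers are computable in log-space, e.g.\ by exhaustive search), and both conclude via {\sf NL}-completeness of 2-SAT. The one small technicality you should spell out is how single-residue cycles are handled, since a strict 2-SAT clause has width exactly two: either duplicate the literal to pad the clause, or substitute the forced residue as a constant when generating the pairwise clauses; either way this is routine and does not affect correctness.
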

\begin{proof}
We will give a log-space reduction to \textsc{2-SAT} which is {\sf NL}-complete \cite{papadimitriou} and use the following notations:
\begin{enumerate}
\item $x_1 \Rightarrow x_2$ for $x_1 \lor \neg x_2$
\item $x_1 \xor x_2$ for $(x_1 \lor x_2) \land (\neg x_1 \lor \neg x_2)$.
\end{enumerate}
In the first step we check in log-space for every fixed point $i^\alpha = i$ whether $i \in \{i^\beta-1,i^\beta,i^\beta+1\}$. In the following it therefore suffices to consider cycles of length at least $2.$ Since $\alpha$ is given in standard representation we can compute in log-space the cycle representation of $\alpha$ \cite{cook}. Let $\alpha = \alpha_1 \cdots \alpha_m$ be the disjoint cycle decomposition (without fixed points) of $\alpha$ and let $a_i \geq 2$ denote the length of $\alpha_i$. For $i=1,\dots,m$ we define the ordered set
\begin{displaymath}
X_i = \{v \mid 0 \leq v < a_i, \forall j \in \act(\alpha_i): j^{\alpha_i^v} \in \{j^\beta-1,j^\beta,j^\beta+1\}\}
\end{displaymath}
and $X_{m+1} = \emptyset$. By Lemma~\ref{lemmak1atmost2solutions} we have $|X_i| \leq 2$. When we write $X_i = \{v_1,v_2\}$ we mean $v_1 < v_2$. If there is an $i \in [1,m]$ with $|X_i| = 0$ there clearly is no such $z$. Therefore we assume in the following $1 \leq |X_i| \leq 2$ for all $i \in [1,m]$. When we speak of the $p$-adic valuation of some $a_i$ we always mean the case that $\nu_p(a_i) \geq 1$. For every prime power $p^d \leq n$ with $d \geq 1$ (there clearly are at most $n$ such prime powers) we define $i_{p,d} = \min(\{j \mid d = \nu_p(a_j)\} \cup \{m+1\})$ and define the ordered set
\begin{displaymath}
Y_{p,d} = \{u \in [0,p^d-1] \mid \exists v \in X_{i_{p,d}}: v \equiv u \bmod p^d\}.
\end{displaymath}
Note that we have $0 \leq |Y_{p,d}| \leq 2$. If $|Y_{p,d}| = 0$ then there is no $i \in [1,m]$ with $d = \nu_p(a_i)$. We use $kY_{p,d}$ to denote the $k^\text{th}$ element of $Y_{p,d}$. Now we introduce $|Y_{p,d}|+1$ variables $x_{p,d,0},\dots,x_{p,d,|Y_{p,d}|}$ for all $p^d \leq n$ and define a \textsc{2-SAT} formula by the following:
\begin{displaymath}
F_0 = \bigwedge_{p^d \leq n} \neg x_{p,d,0} \land \bigwedge_{\substack{p^d \leq n, \\|Y_{p,d}| = 2}} (x_{p,d,1} \xor x_{p,d,2}) \land \bigwedge_{\substack{p^d \leq n, \\|Y_{p,d}| = 1}} x_{p,d,1}.
\end{displaymath}
Moreover for every prime $p \leq n$ we define
\begin{displaymath}
F'_p = \bigwedge_{p^d \leq n} \bigwedge_{\substack{p^e \leq n,\\d \leq e}} \bigwedge_{k_1=1}^{|Y_{p,d}|} \bigwedge_{k_2=1}^{|Y_{p,e}|} \varphi(p,d,e,k_1,k_2)
\end{displaymath}
in which we have
\begin{displaymath}
\varphi(p,d,e,k_1,k_2) = \begin{cases}
x_{p,e,k_2} \Rightarrow x_{p,d,k_1} & \text{if } k_1Y_{p,d} \equiv k_2Y_{p,e} \bmod p^d\\
x_{p,e,k_2} \Rightarrow \neg x_{p,d,k_1} & \text{if } k_1Y_{p,d} \not \equiv k_2Y_{p,e} \bmod p^d.
\end{cases}
\end{displaymath}
Now for all $i \in [1,m]$ and every prime power $p^d \mid a_i$ with $d = \nu_p(a_i)$ we define literals by the following: if $X_i = \{v\}$ we define
\begin{displaymath}
\tilde{x}_{i,p,d,0} = \begin{cases}
x_{p,d,1} & \text{if } |Y_{p,d}|=1,1Y_{p,d} \equiv v \bmod p^d\\
x_{p,d,1} & \text{if } |Y_{p,d}|=2,1Y_{p,d} \equiv v \bmod p^d\\
x_{p,d,2} & \text{if } |Y_{p,d}|=2,2Y_{p,d} \equiv v \bmod p^d\\
x_{p,d,0} & \text{otherwise}
\end{cases}
\end{displaymath}
and if $X_i = \{v_1,v_2\}$ we define in the case $v_1 \not \equiv v_2 \bmod p^d$
\begin{displaymath}
\tilde{x}_{i,p,d,1} = \begin{cases}
x_{p,d,1} & \text{if } |Y_{p,d}|=1,1Y_{p,d} \equiv v_1 \bmod p^d\\
x_{p,d,1} & \text{if } |Y_{p,d}|=2,1Y_{p,d} \equiv v_1 \bmod p^d\\
x_{p,d,2} & \text{if } |Y_{p,d}|=2,2Y_{p,d} \equiv v_1 \bmod p^d\\
x_{p,d,0} & \text{otherwise}
\end{cases}
\end{displaymath}
and
\begin{displaymath}
\tilde{x}_{i,p,d,2} = \begin{cases}
x_{p,d,1} & \text{if } |Y_{p,d}|=1,1Y_{p,d} \equiv v_2 \bmod p^d\\
x_{p,d,1} & \text{if } |Y_{p,d}|=2,1Y_{p,d} \equiv v_2 \bmod p^d\\
x_{p,d,2} & \text{if } |Y_{p,d}|=2,2Y_{p,d} \equiv v_2 \bmod p^d\\
x_{p,d,0} & \text{otherwise}.
\end{cases}
\end{displaymath}
If $v_1 \equiv v_2 \bmod p^d$ we define
\begin{displaymath}
\tilde{x}_{i,p,d,0} = \begin{cases}
x_{p,d,1} & \text{if } |Y_{p,d}|=1,1Y_{p,d} \equiv v_1 \bmod p^d\\
x_{p,d,1} & \text{if } |Y_{p,d}|=2,1Y_{p,d} \equiv v_1 \bmod p^d\\
x_{p,d,2} & \text{if } |Y_{p,d}|=2,2Y_{p,d} \equiv v_1 \bmod p^d\\
x_{p,d,0} & \text{otherwise}
\end{cases}
\end{displaymath}
and define the formula
\begin{displaymath}
F_i = \begin{cases}
F_{i,1} & \text{if } |X_i|=1\\
F_{i,2} \land F_{i,3} & \text{if } |X_i|=2
\end{cases}
\end{displaymath}
in which
\begin{displaymath}
F_{i,1} = \bigwedge_{\substack{p^d \mid a_i\\\text{ with } d = \nu_p(a_i)}} \tilde{x}_{i,p,d,0}
\end{displaymath}
and
\begin{displaymath}
F_{i,2} = \bigwedge_{\substack{p^d \mid a_i\\\text{ with } d = \nu_p(a_i),\\v_1 \not \equiv v_2 \bmod p^d}} \bigwedge_{\substack{q^e \mid a_i\\\text{ with } e = \nu_q(a_i),\\v_1 \not \equiv v_2 \bmod q^e}} (\tilde{x}_{i,p,d,1} \xor \tilde{x}_{i,q,e,2})
\end{displaymath}
and
\begin{displaymath}
F_{i,3} = \bigwedge_{\substack{p^d \mid a_i\\\text{ with } d = \nu_p(a_i),\\v_1 \equiv v_2 \bmod p^d}} \tilde{x}_{i,p,d,0}
\end{displaymath}
for all $i \in [1,m]$. Finally we define our \textsc{2-SAT} formula $F$ by
\begin{displaymath}
F = F_0 \land \bigwedge_{i=1}^m F_i \land \bigwedge_{p \leq n} F'_p.
\end{displaymath}
Now we will show there is a number $z \in \mathbb{N}$ such that $l_\infty(\beta, \alpha^z) \leq 1$ if and only if $F$ is satisfiable.

Suppose there is a number $z \in \mathbb{N}$ such that $l_\infty(\beta, \alpha^z) \leq 1$. For all $i \in [1,m]$ let $0 \leq z_i < a_i$ be the smallest positive integer such that $z_i \equiv z \bmod a_i$. Then we have
\begin{displaymath}
\alpha^z = \prod_{i=1}^m \alpha_i^z = \prod_{i=1}^m \alpha_i^{z_i}.
\end{displaymath}
Then clearly $z_i \in X_i$ for all $i \in [1,m]$. Now we define a truth assignment $\sigma$ by the following: for every prime power $p^d \leq n$ with $d \geq 1$ we define
\begin{displaymath}
\sigma(x_{p,d,0}) = 0.
\end{displaymath}
Moreover for all prime powers $p^d$ with $1 \leq |Y_{p,d}| \leq 2$ we define
\begin{displaymath}
\sigma(x_{p,d,1}) = 1
\end{displaymath}
if $|Y_{p,d}| = 1$. In the case $|Y_{p,d}| = 2$ note that we have $z_{i_{p,d}} \in X_{i_{p,d}}$ and hence we either have $1Y_{p,d} \equiv z_{i_{p,d}} \bmod p^d$ or $2Y_{p,d} \equiv z_{i_{p,d}} \bmod p^d$. We define
\begin{displaymath}
\sigma(x_{p,d,1}) = \begin{cases}
1 & \text{if } 1Y_{p,d} \equiv z_{i_{p,d}} \bmod p^d\\
0 & \text{if } 1Y_{p,d} \not \equiv z_{i_{p,d}} \bmod p^d
\end{cases}
\end{displaymath}
and
\begin{displaymath}
\sigma(x_{p,d,2}) = \begin{cases}
0 & \text{if } 2Y_{p,d} \not \equiv z_{i_{p,d}} \bmod p^d\\
1 & \text{if } 2Y_{p,d} \equiv z_{i_{p,d}} \bmod p^d.
\end{cases}
\end{displaymath}
Note that we have $\sigma(x_{p,d,1}) = 1$ if and only if $\sigma(x_{p,d,2}) = 0$. Now we will show that $\sigma$ satisfies $F$.
\begin{claim}\label{claimsignmasatF0}
$\sigma$ satisfies $F_0$.
\end{claim}
We have $\sigma(x_{p,d,0}) = 0$ by definition. Moreover in the case $|Y_{p,d}| = 1$ we have $\sigma(x_{p,d,1}) = 1$ and if $|Y_{p,d}| = 2$ then we have $\sigma(x_{p,d,1}) = 1$ if and only if $\sigma(x_{p,d,2}) = 0$. Thus the subformula $F_0$ clearly evaluates to true.
\qed
\begin{claim}\label{claimsignmasatFp}
$\sigma$ satisfies $F'_p$ for all primes $p \leq n$.
\end{claim}
It suffices to consider the case $\sigma(x_{p,e,k_2}) = 1$. Since $\sigma(x_{p,e,k_2}) = 1$ we have $k_2Y_{p,e} \equiv z_{i_{p,e}} \bmod p^e$. If $|Y_{p,e}| = 1$ this follows from the definition of $Y_{p,e}$ and if $|Y_{p,e}| = 2$ this follows from the definition of $\sigma$. In the case $\varphi(p,d,e,k_1,k_2) = x_{p,e,k_2} \Rightarrow x_{p,d,k_1}$ we have $\sigma(x_{p,d,k_1}) = 1$ if $|Y_{p,d}| = 1$ by definition of $\sigma$ and if $|Y_{p,d}| = 2$ we have
\begin{displaymath}
z_{i_{p,d}} \equiv z_{i_{p,e}} \equiv k_2Y_{p,e} \equiv k_1Y_{p,d} \bmod p^d
\end{displaymath}
and hence $\sigma(x_{p,d,k_1}) = 1$ and $\varphi(p,d,e,k_1,k_2)$ evaluates to true. Now we consider the case $\varphi(p,d,e,k_1,k_2) = x_{p,e,k_2} \Rightarrow \neg x_{p,d,k_1}$. Suppose $|Y_{p,d}| = 1$. Then we have $\sigma(x_{p,d,k_1}) = 1$ by definition. Moreover since $z_{i_{p,d}} \in X_{i_{p,d}}$ we have $k_1Y_{p,d} \equiv z_{i_{p,d}} \bmod p^d$ by definition of $Y_{p,d}$. Then we obtain on the one hand
\begin{displaymath}
k_2Y_{p,e} \equiv z_{i_{p,e}} \equiv z_{i_{p,d}} \equiv k_1Y_{p,d} \bmod p^d
\end{displaymath}
and on the other hand
\begin{displaymath}
k_2Y_{p,e} \not \equiv k_1Y_{p,d} \bmod p^d
\end{displaymath}
by definition of $\varphi(p,d,e,k_1,k_2)$ which is a contradiction. Hence $|Y_{p,d}| = 2$ and we finally obtain
\begin{displaymath}
z_{i_{p,d}} \equiv z_{i_{p,e}} \equiv k_2Y_{p,e} \not \equiv k_1Y_{p,d} \bmod p^d
\end{displaymath}
which gives us $\sigma(x_{p,d,k_1}) = 0$ and $\varphi(p,d,e,k_1,k_2)$ evaluates to true. Note that $z_{i_{p,e}} \equiv z_{i_{p,d}} \bmod p^d$ because $d \leq e$. Thus $F'_p$ evaluates to true.
\qed
\begin{claim}\label{claimsigmasatFi}
$\sigma$ satisfies $F_i$ for all $i \in [1,m]$.
\end{claim}
In the case $X_i = \{v\}$ we have $F_i = F_{i,1}$. Since $z_i \in X_i$ we have $v = z_i$. Moreover we have $z_{i_{p,d}} \equiv z_i \bmod p^d$ for all prime powers $p^d \mid a_i$ with $d = \nu_p(a_i)$. Hence there is a $k \in [1,2]$ such that $v \equiv z_i \equiv z_{i_{p,d}} \equiv kY_{p,d} \bmod p^d$. From this it follows now that $\tilde{x}_{i,p,d,0} = x_{p,d,k}$. If $|Y_{p,d}| = 1$ then $k=1$ and $\sigma(x_{p,d,1}) = 1$ by definition and if $|Y_{p,d}| = 2$ then $z_{i_{p,d}} \equiv kY_{p,d} \bmod p^d$ and hence $\sigma(x_{p,d,k}) = 1$ by definition which satisfies $F_{i,1}$.

In the case $X_i = \{v_1,v_2\}$ we have $F_i = F_{i,2} \land F_{i,3}$. Let $p^d \mid a_i$ be such that $d = \nu_p(a_i)$ and $v_1 \equiv v_2 \bmod p^d$. Since $z_i \in X_i$ we have $z_i \equiv v_1 \equiv v_2 \bmod p^d$. Moreover we have $z_{i_{p,d}} \equiv z_i \bmod p^d$ for all prime powers $p^d \mid a_i$ with $d = \nu_p(a_i)$. Hence there is a $k \in [1,2]$ such that $v_1 \equiv v_2 \equiv z_i \equiv z_{i_{p,d}} \equiv kY_{p,d} \bmod p^d$. From this it follows now that $\tilde{x}_{i,p,d,0} = x_{p,d,k}$. If $|Y_{p,d}| = 1$ then $k=1$ and $\sigma(x_{p,d,1}) = 1$ by definition and if $|Y_{p,d}| = 2$ then $z_{i_{p,d}} \equiv kY_{p,d} \bmod p^d$ and hence $\sigma(x_{p,d,k}) = 1$ by definition which satisfies $F_{i,3}$. Now let $p^d \mid a_i$ be such that $d = \nu_p(a_i)$ and $v_1 \not \equiv v_2 \bmod p^d$ and let $q^e \mid a_i$ be such that $e = \nu_q(a_i)$ and $v_1 \not \equiv v_2 \bmod q^e$. Since $z_i \in X_i$ there is an $l \in [1,2]$ such that $z_i = v_l$. Moreover we have $z_{i_{p,d}} \equiv z_i \bmod p^d$ for all prime powers $p^d \mid a_i$ with $d = \nu_p(a_i)$. Hence there is a $k_1 \in [1,2]$ such that $v_l \equiv z_i \equiv z_{i_{p,d}} \equiv k_1Y_{p,d} \bmod p^d$. Furthermore we have $z_{i_{q,e}} \equiv z_i \bmod q^e$ for all prime powers $q^e \mid a_i$ with $e = \nu_q(a_i)$. Hence there is a $k_2 \in [1,2]$ such that $v_l \equiv z_i \equiv z_{i_{q,e}} \equiv k_2Y_{q,e} \bmod q^e$. We then have
\begin{displaymath}
\tilde{x}_{i,p,d,1} = \begin{cases}
x_{p,d,k_1} & \text{if } l=1\\
x_{p,d,3-k_1} & \text{if } l=2,|Y_{p,d}|=2,v_{3-l} \equiv (3-k_1)Y_{p,d} \bmod p^d\\
x_{p,d,0} & \text{otherwise}
\end{cases}
\end{displaymath}
and
\begin{displaymath}
\tilde{x}_{i,q,e,2} = \begin{cases}
x_{q,e,k_2} & \text{if } l=2\\
x_{q,e,3-k_2} & \text{if } l=1,|Y_{q,e}|=2,v_{3-l} \equiv (3-k_2)Y_{q,e} \bmod q^e\\
x_{q,e,0} & \text{otherwise}.
\end{cases}
\end{displaymath}
By this we obtain one of the following four cases
\begin{displaymath}
\tilde{x}_{i,p,d,1} \xor \tilde{x}_{i,q,e,2} = \begin{cases}
x_{p,d,k_1} \xor x_{q,e,0}\\
x_{p,d,k_1} \xor x_{q,e,3-k_2}\\
x_{p,d,0} \xor x_{q,e,k_2}\\
x_{p,d,3-k_1} \xor x_{q,e,k_2}.
\end{cases}
\end{displaymath}
We have $\sigma(x_{q,e,0}) = 0$ and $\sigma(x_{p,d,k_1}) = 1$ if $|Y_{p,d}| = 1$ and if $|Y_{p,d}| = 2$ we have $\sigma(x_{p,d,k_1}) = 1$ because $z_{i_{p,d}} \equiv k_1Y_{p,d} \bmod p^d$. Thus $x_{p,d,k_1} \xor x_{q,e,0}$ is satisfied. Since we have $z_{i_{q,e}} \equiv k_2Y_{q,e} \bmod q^e$ we clearly have $z_{i_{q,e}} \not \equiv (3-k_2)Y_{q,e} \bmod q^e$ and thus $\sigma(x_{q,e,3-k_2}) = 0$ and $x_{p,d,k_1} \xor x_{q,e,3-k_2}$ is satisfied. Moreover we have $\sigma(x_{p,d,0}) = 0$ and $\sigma(x_{q,e,k_2}) = 1$ if $|Y_{q,e}| = 1$ and if $|Y_{q,e}| = 2$ we have $\sigma(x_{q,e,k_2}) = 1$ because $z_{i_{q,e}} \equiv k_2Y_{q,e} \bmod q^e$. Thus $x_{p,d,0} \xor x_{q,e,k_2}$ is satisfied. Since we have $z_{i_{p,d}} \equiv k_1Y_{p,d} \bmod p^d$ we clearly have $z_{i_{p,d}} \not \equiv (3-k_1)Y_{p,d} \bmod p^d$ and thus $\sigma(x_{p,d,3-k_1}) = 0$ and $x_{p,d,3-k_1} \xor x_{q,e,k_2}$ is satisfied. We finally obtain that $F_i$ is satisfied.
\qed

By Claim~\ref{claimsignmasatF0},\ref{claimsignmasatFp} and \ref{claimsigmasatFi} it follows now that $F$ is satisfied by $\sigma$.

Vice versa suppose $F$ is satisfiable and let $\sigma$ be a satisfying truth assignment. Then for every prime power $p^d$ with $|Y_{p,d}| > 0$ we define numbers $b_{p,d}$ by the following
\begin{displaymath}
b_{p,d} = \begin{cases}
1Y_{p,d} & \text{if } |Y_{p,d}| = 1\\
1Y_{p,d} & \text{if } |Y_{p,d}| = 2, \sigma(x_{p,d,1}) = 1\\
2Y_{p,d} & \text{if } |Y_{p,d}| = 2, \sigma(x_{p,d,2}) = 1.
\end{cases}
\end{displaymath}
Note that by the subformula $F_0$ we have if $|Y_{p,d}| = 1$ then $\sigma(x_{p,d,1}) = 1$ and if $|Y_{p,d}| = 2$ then $x_{p,d,1} \xor x_{p,d,2}$ gives us either $\sigma(x_{p,d,1}) = 1$ or $\sigma(x_{p,d,2}) = 1$. Thus we have $b_{p,d} = kY_{p,d}$ if and only if $\sigma(x_{p,d,k}) = 1$ for some $k \in [1,2]$. For all $i \in [1,m]$ we define the number $b_i$ as the smallest positive integer satisfying the congruences
\begin{displaymath}
b_i \equiv b_{p,d} \bmod p^d
\end{displaymath}
for all prime powers $p^d \mid a_i$ with $d = \nu_p(a_i)$. Then we have $0 \leq b_i < a_i$.
\begin{claim}\label{claimbiinXi}
For all $i \in [1,m]$ we have $b_i \in X_i$.
\end{claim}
In the case $X_i = \{v\}$ we have for every prime power $p^d \mid a_i$ with $d = \nu_p(a_i)$ that $1 = \sigma(\tilde{x}_{i,p,d,0}) = \sigma(x_{p,d,k})$ for some $k \in [1,2]$ by $F_{i,1}$ since $F_0$ gives us $\sigma(x_{p,d,0}) = 0$ and hence $kY_{p,d} \equiv v \bmod p^d$. Thus we obtain $b_{p,d} = kY_{p,d}$ from which it follows now that
\begin{displaymath}
b_i \equiv b_{p,d} \equiv kY_{p,d} \equiv v \bmod p^d.
\end{displaymath}
All congruences together now give us $b_i \equiv v \bmod a_i$ and since $0 \leq b_i,v < a_i$ we obtain $b_i = v$. In the case $X_i = \{v_1,v_2\}$ we have for every prime power $p^d \mid a_i$ with $d = \nu_p(a_i)$ and $v_1 \equiv v_2 \bmod p^d$ that $1 = \sigma(\tilde{x}_{i,p,d,0}) = \sigma(x_{p,d,k})$ for some $k \in [1,2]$ by $F_{i,3}$ and hence $kY_{p,d} \equiv v_1 \equiv v_2 \bmod p^d$. Thus we obtain $b_{p,d} = kY_{p,d}$ from which it follows now that
\begin{displaymath}
b_i \equiv b_{p,d} \equiv kY_{p,d} \equiv v_1 \equiv v_2 \bmod p^d.
\end{displaymath}
Moreover we either have $\sigma(\tilde{x}_{i,p,d,1}) = 1$ and $\sigma(\tilde{x}_{i,q,e,2}) = 0$ or $\sigma(\tilde{x}_{i,p,d,1}) = 0$ and $\sigma(\tilde{x}_{i,q,e,2}) = 1$ for every prime power $p^d \mid a_i$ with $d = \nu_p(a_i)$ and $v_1 \not \equiv v_2 \bmod p^d$ and all $q^e \mid a_i$ with $e = \nu_q(a_i)$ and $v_1 \not \equiv v_2 \bmod q^e$. This follows from the following: let $p_1^{d_1} \mid a_i$ with $d_1 = \nu_{p_1}(a_i)$ and $p_2^{d_2} \mid a_i$ with $d_2 = \nu_{p_2}(a_i)$ be prime powers (we may have $p_1 = p_2$) and assume $\sigma(\tilde{x}_{i,p_1,d_1,1}) = c$ and $\sigma(\tilde{x}_{i,p_2,d_2,2}) = c$ for some $c \in \{0,1\}$. Then $F_{i,2}$ gives us $\tilde{x}_{i,p_1,d_1,1} \xor \tilde{x}_{i,p_2,d_2,2}$ which yields a contradiction. Now let $l \in [1,2]$ be such that $\sigma(\tilde{x}_{i,p,d,l}) = 1$ for every prime power $p^d \mid a_i$ with $d = \nu_p(a_i)$ and $v_1 \not \equiv v_2 \bmod p^d$ and let $k \in [1,2]$ be such that $\tilde{x}_{i,p,d,l} = x_{p,d,k}$. Note that $k=0$ is not possible since $\sigma(\tilde{x}_{i,p,d,l}) = 1$ and $\sigma(x_{p,d,0}) = 0$ by $F_0$. Then we have $v_l \equiv kY_{p,d} \bmod p^d$ and $\sigma(x_{p,d,k}) = 1$ from which it follows now that
\begin{displaymath}
b_i \equiv b_{p,d} \equiv kY_{p,d} \equiv v_l \bmod p^d.
\end{displaymath}
All congruences together now give us $b_i \equiv v_l \bmod a_i$ and since $0 \leq b_i,v_l < a_i$ we obtain $b_i = v_l$.
\qed
\begin{claim}\label{claimbequivbiai}
There is $b \in \mathbb{N}$ such that $b \equiv b_i \bmod a_i$ for all $i \in [1,m]$.
\end{claim}
Let $i \in [1,m]$ and $j \in [1,m]$ be such that $p^d \mid a_i$ is a prime power with $d = \nu_p(a_i)$ and $p^e \mid a_j$ is a prime power with $e = \nu_p(a_j)$ and $d \leq e$. Then there are $k_1,k_2 \in [1,2]$ such that $\sigma(x_{p,d,k_1}) = 1$ and $\sigma(x_{p,e,k_2}) = 1$ because of $F_0$. Then we have $b_{p,d} = k_1Y_{p,d}$ and $b_{p,e} = k_2Y_{p,e}$. Assume $b_{p,d} \not \equiv b_{p,e} \bmod p^d$. Then the subformula $F'_p$ gives us
\begin{displaymath}
\varphi(p,d,e,k_1,k_2) = \begin{cases}
x_{p,e,k_2} \Rightarrow x_{p,d,k_1} & \text{if } k_1Y_{p,d} \equiv k_2Y_{p,e} \bmod p^d\\
x_{p,e,k_2} \Rightarrow \neg x_{p,d,k_1} & \text{if } k_1Y_{p,d} \not \equiv k_2Y_{p,e} \bmod p^d.
\end{cases}
\end{displaymath}
We have
\begin{displaymath}
k_1Y_{p,d} \equiv b_{p,d} \not \equiv b_{p,e} \equiv k_2Y_{p,e} \bmod p^d
\end{displaymath}
by assumption which gives us
\begin{displaymath}
\varphi(p,d,e,k_1,k_2) = x_{p,e,k_2} \Rightarrow \neg x_{p,d,k_1}.
\end{displaymath}
Since we have $\sigma(x_{p,d,k_1}) = 1$ and $\sigma(x_{p,e,k_2}) = 1$ we obtain that $F'_p$ evaluates to false which is a contradiction. Thus $b_{p,d} \equiv b_{p,e} \bmod p^d$ and we can define $b \equiv b_i \bmod a_i$ for all $i \in [1,m]$.
\qed

By Claim~\ref{claimbequivbiai} we can define $0 \leq b < \ord(\alpha)$ as the smallest positive integer satisfying $b \equiv b_i \bmod a_i$ for all $i \in [1,m]$. Then we have
\begin{displaymath}
\alpha^b = \prod_{i=1}^m \alpha_i^b = \prod_{i=1}^m \alpha_i^{b_i}
\end{displaymath}
in which by Claim~\ref{claimbiinXi} we have $b_i \in X_i$ from which it finally follows that for all $j \in [1,n]$ we have
\begin{displaymath}
j^{\alpha^b} \in \{j^\beta-1,j^\beta,j^\beta+1\}
\end{displaymath}
and hence $l_\infty(\beta, \alpha^b) \leq 1$.
\end{proof}

\begin{lemma}\label{linfty31lemma2}
Let $l \geq 3$ be an integer and let $a \in [0,l-1]$. Then we have $l_\infty(\llbracket l \rrbracket^a, \llbracket l \rrbracket^x) \leq 1$ if and only if $x \equiv a \bmod l$.
\end{lemma}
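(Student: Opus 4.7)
\medskip

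\noindent\textbf{Proof plan.} The forward direction is trivial: if $x \equiv a \bmod l$ then $\llbracket l \rrbracket^x = \llbracket l \rrbracket^a$, so $l_\infty(\llbracket l \rrbracket^a, \llbracket l \rrbracket^x) = 0 \leq 1$. For the converse I prove the contrapositive. Let $y \in [0, l-1]$ be the unique representative with $y \equiv x \bmod l$, and suppose $y \neq a$. By symmetry I may assume $a < y$, so $d := y - a \in [1, l-1]$.

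Using exactly the same evaluation formulas that appear in the proof of Lemma~\ref{lemmahammingdistance}, for any $1 \leq i \leq l$
\begin{displaymath}
i^{\llbracket l \rrbracket^a} = \begin{cases} i+a & \text{if } i+a \leq l\\ i+a-l & \text{otherwise}\end{cases}
\qquad\text{and}\qquad
i^{\llbracket l \rrbracket^x} = \begin{cases} i+y & \text{if } i+y \leq l\\ i+y-l & \text{otherwise.}\end{cases}
\end{displaymath}
Since $a < y$, the case $i+a > l$ together with $i+y \leq l$ cannot occur, so the signed difference $i^{\llbracket l \rrbracket^a} - i^{\llbracket l \rrbracket^x}$ takes only two possible values: it equals $-d$ when $i+a, i+y$ are on the same side of $l$, and it equals $l - d$ when $i+a \leq l < i+y$.

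The plan is then to exhibit one position witnessing each case. At $i = 1$ we have $1+a \leq 1+y \leq l$, so the first case occurs and $|1^{\llbracket l \rrbracket^a} - 1^{\llbracket l \rrbracket^x}| = d$. At $i = l-a$ we have $i+a = l$ while $i+y = l + d > l$, so the second case occurs and the absolute difference equals $l - d$. Hence $l_\infty(\llbracket l \rrbracket^a, \llbracket l \rrbracket^x) \leq 1$ would force both $d \leq 1$ and $l - d \leq 1$, giving $l \leq 2$ and contradicting the hypothesis $l \geq 3$.

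There is no real obstacle here; the only care needed is to verify that the two witness positions $i=1$ and $i=l-a$ lie in $[1,l]$ (which holds because $0 \leq a \leq l-1$, so $l-a \geq 1$) and that the excluded fourth case in the analysis of the signed difference really cannot occur under the assumption $a < y$. The rest is routine arithmetic modelled on Lemma~\ref{lemmahammingdistance}.
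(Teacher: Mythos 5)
Your proof is correct, and it takes the same basic approach as the paper (evaluate $\llbracket l \rrbracket^e$ explicitly at well-chosen positions to locate a large coordinate discrepancy), but the organization is genuinely a bit cleaner. The paper fixes $b \in [1,l-1]$ with $x \equiv a+b \bmod l$ and shows a single position suffices for $b \in [1,l-2]$; it then needs a separate analysis for $b = l-1$ (itself split into $a=0$ and $a \geq 1$), because the chosen position $l-a$ yields a distance of exactly $1$ there. You avoid this case split entirely by first invoking the symmetry of $l_\infty$ to reduce to $a < y$, and then exhibiting \emph{two} witness positions, $i=1$ (distance $d$) and $i=l-a$ (distance $l-d$), which together force $l \leq 2$ if the $l_\infty$ bound held. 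The symmetry reduction is valid since the claim being shown is symmetric in the pair $(a,y)$ and both lie in $[0,l-1]$, and your check that $l-a \in [1,l]$ and that the ``mixed'' wrap-around case is excluded under $a<y$ are both correct. The net effect is the same lemma by the same elementary calculation, but with one fewer case and a tidy ``$d$ and $l-d$ can't both be $\leq 1$'' punchline.
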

\begin{proof}
One direction follows immediately since we clearly have $l_\infty(\llbracket l \rrbracket^a, \llbracket l \rrbracket^a) = 0$. Now suppose $l_\infty(\llbracket l \rrbracket^a, \llbracket l \rrbracket^x) \leq 1$. It suffices to show $l_\infty(\llbracket l \rrbracket^a, \llbracket l \rrbracket^x) > 1$ if $x \not \equiv a \bmod l$. In the case $b \in [1,l-2]$ we have $(l-a)^{\llbracket l \rrbracket^a} = l$ and $(l-a)^{\llbracket l \rrbracket^{a+b}} = b$ giving us a distance of at least $2$. In the case $b=l-1$ and $a=0$ we have $1^{\llbracket l \rrbracket^0} = 1$ and $1^{\llbracket l \rrbracket^{l-1}} = l$ which gives us a distance of $l-1$. Finally consider the remaining case $b=l-1$ and $a \in [1,l-1]$. We have $(l-a+1)^{\llbracket l \rrbracket^a} = 1$ and $(l-a+1)^{\llbracket l \rrbracket^{a+(l-1)}} = (l-a+1)^{\llbracket l \rrbracket^{a-1}} = l$ which gives us also a distance of $l-1$.
\end{proof}

\begin{theorem}\label{theoremlinftyl1l2}
Let $t \in \mathbb{N}$ be odd and let $0 \leq t_1 < t_2 < t$ be such that $t_1 \not \equiv t_2 \bmod p$ for all primes $p$ with $p \mid t$. Then there is a cycle $\alpha$ of length $t$ and a permutation $\beta$ in which $\beta$ is a product of disjoint $2$-cycles such that $l_\infty(\beta, \alpha^{t_1}) \leq 1$ and $l_\infty(\beta, \alpha^{t_2}) \leq 1$.
\end{theorem}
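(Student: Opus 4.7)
The plan is to give an explicit construction of a cycle $\alpha$ of length $t$ and an involution $\beta$ satisfying all the required properties. I first record a structural consequence of the uniqueness-match condition. Writing $\alpha$ in cycle form as $(c_0, \ldots, c_{t-1})$ and letting $k(i) \in [0, t-1]$ denote the unique index $j$ with $c_j = i$, the unique $x \in [0, t-1]$ satisfying $i^{\alpha^x} = i^\beta$ is $y_i \equiv k(i^\beta) - k(i) \pmod{t}$. The uniqueness condition is then equivalent to the map $i \mapsto y_i$ being a bijection on $[0, t-1]$. Since $\beta$ is an involution, $y_{\beta(i)} \equiv -y_i \pmod{t}$; because $t$ is odd, $2y \equiv 0 \pmod{t}$ forces $y = 0$, so the only $i$'s with $y_i = 0$ are the fixed points of $\beta$. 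Combined with bijectivity and the fact that an involution on an odd-size set has an odd number of fixed points, $\beta$ must have exactly one fixed point, and its $(t-1)/2$ transpositions must correspond bijectively to the $(t-1)/2$ pairs $\{a, t-a\}$ of nonzero residues modulo $t$.

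I would then construct $\alpha$ via an auxiliary cycle of small displacement. Define $\sigma := (1, 3, 5, \ldots, t, t-1, t-3, \ldots, 2)$, the $t$-cycle listing odd entries ascending then even entries descending. Since $t$ is odd, $\sigma$ is a single $t$-cycle and $l_\infty(\sigma, \mathrm{id}) \leq 2$. Using $\gcd(t_2 - t_1, t) = 1$, let $r := (t_2 - t_1)^{-1} \bmod t$, which is coprime to $t$; then $\alpha := \sigma^r$ is a $t$-cycle. Writing $m := r t_1 \bmod t$, we have $\alpha^{t_1} = \sigma^m$ and $\alpha^{t_2} = \sigma^{m+1}$, so the two $l_\infty$ conditions reduce to finding a single involution $\beta$ within $l_\infty$-distance 1 of both $\sigma^m$ and $\sigma^{m+1}$.

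To construct $\beta$, observe that for each index $i$, the intersection $\{\sigma^m(i) - 1, \sigma^m(i), \sigma^m(i) + 1\} \cap \{\sigma^{m+1}(i) - 1, \sigma^{m+1}(i), \sigma^{m+1}(i) + 1\}$ is non-empty because $|\sigma(j) - j| \leq 2$ for all $j$. In fact this intersection has size one for every $i$ except the two boundary cases $\sigma^m(i) \in \{2, t\}$, where there are two choices. I would define $\beta(i)$ explicitly by case analysis on the parity of $\sigma^m(i)$, and resolve the two free choices so that $\beta$ becomes an involution with exactly one fixed point whose transpositions correspond to distinct pairs $\{a, t-a\}$ modulo $t$, as required by the first paragraph.

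The main obstacle will be the explicit construction of $\beta$ and the verification of the distinctness property of its transpositions: one needs to show that the case analysis always produces a consistent involution with the right fixed-point count and the right injectivity on pair classes. These should follow from tracking how $\sigma^m$ shifts the ordering $1, 3, 5, \ldots, t, t-1, \ldots, 2$ relative to the standard order $1, 2, \ldots, t$. The hypothesis $\gcd(t_1 - t_2, t) = 1$ is used throughout: it makes $r$ well-defined, ensures $\alpha$ is a single $t$-cycle, and is precisely what makes the bijective assignment of $\beta$-transpositions to pair classes possible.
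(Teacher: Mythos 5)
Your reduction is a clean way to recover the paper's construction: the paper defines a $t$-cycle $\alpha$ so that, in the ordering $\alpha_{\omega_0},\alpha_{\omega_1},\dots,\alpha_{\omega_{t-1}}$ with $\omega = t_2-t_1$, the entries read $1,3,5,\dots,t,t-1,\dots,2$ --- i.e.\ exactly $\alpha^\omega = \sigma$, equivalently $\alpha = \sigma^{(t_2-t_1)^{-1}}$ as you set up. Your observation that $\alpha^{t_1}=\sigma^m$ and $\alpha^{t_2}=\sigma^{m+1}$ with $m=rt_1 \bmod t$ is correct and simplifies the bookkeeping compared to the paper's $d_{i,1},d_{i,2}$ machinery, and your structural preliminaries (bijectivity of $i\mapsto y_i$, oddness of $t$ forcing exactly one fixed point, pairing of $y$-values $\{a,t-a\}$ across each transposition) are correct necessary conditions on $\beta$. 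The midpoint rule you sketch for defining $\beta(i)$ is the same as the paper's $\alpha_i'$ construction.

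However, the proposal stops precisely where the real work of the proof lies, and you acknowledge this. What is missing is not a minor verification; it is the bulk of the theorem. Concretely: (i) showing the midpoint rule actually produces a \emph{function} that is an \emph{involution}, i.e.\ that the assigned transpositions are pairwise disjoint --- this is the paper's Claim~\ref{claimrelalphaialphaj} ($\alpha_i'=\alpha_j \Leftrightarrow \alpha_j'=\alpha_i$), which requires a nontrivial index computation exploiting the coprimality of $\omega$ with $t$; (ii) resolving the two ``free'' boundary choices (at $\sigma^m(i)\in\{2,t\}$) compatibly so the result is still an involution with a single fixed point; and (iii) proving the second part of the statement --- that for every $x\in[0,t-1]$ there is a \emph{unique} $i$ with $i^{\alpha^x}=i^\beta$ --- which in the paper is the distinctness of the quantities $\Delta(\omega_{\psi_i},\omega_{t-1-i})$ and again uses that $t$ is odd in an essential way. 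Your first-paragraph analysis shows these properties are forced if the construction works, but it does not show that the midpoint rule in fact satisfies them, and that gap cannot be closed by ``tracking how $\sigma^m$ shifts the ordering'' without doing the computation.

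So: same construction as the paper, correctly set up and with a nice reformulation, but the verification that the construction has the claimed properties --- which is where essentially all the difficulty sits --- is absent.
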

\begin{proof}
We define
\begin{displaymath}
\omega = t_2-t_1.
\end{displaymath}
Then $\omega$ is a generator of the additive group $(\mathbb{Z}_t,+)$ since $t_1 \not \equiv t_2 \bmod p$ for all primes $p$ with $p \mid t$ and in particular $\omega$ and $t$ are coprime and we can define $0 \leq \psi < t$ as the smallest positive integer satisfying
\begin{displaymath}
\psi \equiv \omega^{-1}(t-t_1) \bmod t
\end{displaymath}
since $\omega^{-1} \bmod t$ exists. For $i=0,\dots,t-1$ we define $0 \leq \omega_i < t$ as the smallest positive integer satisfying $\omega_i \equiv i\omega \bmod t$. Moreover for $i=0,\dots,t-1$ we define $0 \leq \psi_i < t$ as the smallest positive integer satisfying $\psi_i \equiv \psi + i \bmod t$. Now we define the cycle $\alpha = (\alpha_0,\dots,\alpha_{t-1})$ of length $t$ by the following:
\begin{displaymath}
\alpha_{\omega_i} = \begin{cases}
2i+1 & \text{if } 0 \leq i \leq \frac{t-1}{2}\\
2(t-i) & \text{if } \frac{t+1}{2} \leq i \leq t-1.
\end{cases}
\end{displaymath}
For $i=0,\dots,t-1$ we define $0 \leq d_{i,1},d_{i,2} < t$ as the smallest positive integers satisfying
\begin{displaymath}
d_{i,k} \equiv i + t_k \bmod t
\end{displaymath}
for $k \in [1,2]$. Then $\alpha^{t_k}$ maps $\alpha_i$ to $\alpha_{d_{i,k}}$ for all $i \in [0,t-1]$.

\begin{claim}\label{claimreldi1di2}
Let $i \in [0,t-1]$ and let $j \in [0,t-1]$ be such that $\omega_j = d_{i,1}$. If $j=t-1$ then $d_{i,2} = \omega_0$ and if $0 \leq j \leq t-2$ then $d_{i,2} = \omega_{j+1}$.
\end{claim}
We have
\begin{eqnarray*}
& d_{i,1} & \equiv \ \ i + t_1 \bmod t\\
\Leftrightarrow & i & \equiv \ \ d_{i,1} - t_1 \bmod t\\
\Leftrightarrow & d_{i,2} \equiv i + t_2 & \equiv \ \ d_{i,1} - t_1 + t_2 \equiv d_{i,1} + \omega \bmod t\\
\Leftrightarrow & d_{i,2} & \equiv \ \ \omega_j + \omega \bmod t\\
\Leftrightarrow & d_{i,2} & \equiv \ \ j\omega + \omega \bmod t\\
\Leftrightarrow & d_{i,2} & \equiv \ \ (j+1)\omega \bmod t\\
\Leftrightarrow & d_{i,2} & \equiv \ \ \begin{cases}
\omega_0 \bmod t & \text{if } j = t-1\\
\omega_{j+1} \bmod t & \text{if } j \in [0,l-2].
\end{cases}
\end{eqnarray*}
Since $0 \leq d_{i,2} < t$ and $0 \leq \omega_0,\omega_{j+1} < t$ we finally obtain
\begin{displaymath}
d_{i,2} = \begin{cases}
\omega_0 & \text{if } j = t-1\\
\omega_{j+1} & \text{if } j \in [0,t-2].
\end{cases}
\end{displaymath}
\qed

By Claim~\ref{claimreldi1di2} we have $1 \leq |\alpha_{d_{i,1}} - \alpha_{d_{i,2}}| \leq 2$ for all $i \in [0,t-1]$ and if $|\alpha_{d_{i,1}} - \alpha_{d_{i,2}}| = 1$ then either $\alpha_{d_{i,2}} = 1$ or $\alpha_{d_{i,1}} = t$. We define for all $i \in [0,t-1]$
\begin{displaymath}
\alpha_i' = \begin{cases}
\frac{\alpha_{d_{i,1}}+\alpha_{d_{i,2}}}{2} & \text{if } |\alpha_{d_{i,1}} - \alpha_{d_{i,2}}| = 2\\
1 & \text{if } |\alpha_{d_{i,1}} - \alpha_{d_{i,2}}| = 1 \text{ and } \alpha_{d_{i,2}} = 1\\
t & \text{if } |\alpha_{d_{i,1}} - \alpha_{d_{i,2}}| = 1 \text{ and } \alpha_{d_{i,1}} = t.
\end{cases}
\end{displaymath}
Now we define the permutation $\beta$ by the following:
\begin{displaymath}
\beta = \prod_{i=0}^{t-1} \beta_i
\end{displaymath}
in which
\begin{displaymath}
\beta_i = \begin{cases}
(\alpha_i, \alpha_i') & \text{if } \alpha_i < \alpha_i'\\
\mathrm{id} & \text{otherwise}.
\end{cases}
\end{displaymath}

\begin{claim}\label{claimreldomegaidomegai1}
For all $i \in [0,t-1]$ we have $d_{\omega_{\psi_i},1} = \omega_i$.
\end{claim}
We have
\begin{align*}
d_{\omega_{\psi_i},1} &\equiv \omega_{\psi_i} + t_1 \bmod t\\
&\equiv \psi_i \omega + t_1 \bmod t\\
&\equiv ((t-t_1)\omega^{-1}+i)\omega + t_1 \bmod t\\
&\equiv t-t_1+i\omega+t_1 \bmod t\\
&\equiv \omega_i \bmod t.
\end{align*}
Since $0 \leq d_{\omega_{\psi_i},1},\omega_i < t$ we finally obtain $d_{\omega_{\psi_i},1} = \omega_i$.
\qed

\begin{claim}\label{claimrelpsiil1i}
For all $i \in [0,t-1]$ we have $\alpha'_{\omega_{\psi_i}} = \alpha_{\omega_{t-1-i}}$.
\end{claim}
Note that by Claim~\ref{claimreldomegaidomegai1} we have $d_{\omega_{\psi_i},1} = \omega_i$. Then we have by Claim~\ref{claimreldi1di2}
\begin{displaymath}
d_{\omega_{\psi_i},2} = \begin{cases}
\omega_{i+1} & \text{if } i \in [0,t-2]\\
\omega_0 & \text{if } i = t-1.
\end{cases}
\end{displaymath}
Case~1: $i \in [0,\frac{t-3}{2}]$. We have
\begin{align*}
\alpha'_{\omega_{\psi_i}}&= \frac{\alpha_{d_{\omega_{\psi_i},1}} + \alpha_{d_{\omega_{\psi_i},2}}}{2}\\
&= \frac{\alpha_{\omega_i} + \alpha_{\omega_{i+1}}}{2}\\
&= \frac{2i+1 + 2(i+1)+1}{2}\\
&= 2i+2\\
&= 2(t-(t-1-i))\\
&= \alpha_{\omega_{t-1-i}}.
\end{align*}
Case~2: $i = \frac{t-1}{2}$. We have $\alpha_{d_{\omega_{\psi_i},1}} = \alpha_{\omega_i} = t$ and hence
\begin{displaymath}
\alpha'_{\omega_{\psi_i}} = \alpha_{d_{\omega_{\psi_i},1}} = t = \alpha_{\omega_{t-1-\frac{t-1}{2}}}.
\end{displaymath}
Case~3: $i \in [\frac{t+1}{2},t-2]$. We have
\begin{align*}
\alpha'_{\omega_{\psi_i}}&= \frac{\alpha_{d_{\omega_{\psi_i},1}} + \alpha_{d_{\omega_{\psi_i},2}}}{2}\\
&= \frac{\alpha_{\omega_i} + \alpha_{\omega_{i+1}}}{2}\\
&= \frac{2(t-i) + 2(t-(i+1))}{2}\\
&= 2(t-i)-1\\
&= 2(t-1-i)+1\\
&= \alpha_{\omega_{t-1-i}}.
\end{align*}
Case~4: $i=t-1$. We have $\alpha_{d_{\omega_{\psi_i},2}} = \alpha_{\omega_0} = 1$ and hence
\begin{displaymath}
\alpha'_{\omega_{\psi_i}} = \alpha_{d_{\omega_{\psi_i},2}} = 1 = \alpha_{\omega_{t-1-(t-1)}}.
\end{displaymath}
\qed

\begin{claim}\label{claimrelalphaialphaj}
For all $i \in [0,t-1]$ and $j \in [0,t-1]$ we have $\alpha_i' = \alpha_j$ if and only if $\alpha_j' = \alpha_i$.
\end{claim}
Suppose $\alpha_i' = \alpha_j$ and let $0 \leq e < t$ be such that $i = \omega_{\psi_e}$. Note that $e$ exists. Since $\omega$ is a generator there is $c \in [0,t-1]$ such that $i \equiv \omega_c \bmod t$ and we can choose $e \equiv c - \psi \bmod t$. By Claim~\ref{claimrelpsiil1i} we have $\alpha'_{\omega_{\psi_e}} = \alpha_{\omega_{t-1-e}}$ (i.e. $j = \omega_{t-1-e}$). Claim~\ref{claimrelpsiil1i} also gives us
\begin{align*}
\alpha'_j &= \alpha'_{\omega_{t-1-e}}\\
&= \alpha'_{\omega_{\psi-\psi+t-1-e}}\\
&= \begin{cases}
\alpha'_{\omega_{\psi_{t-1-e-\psi}}} & \text{if } \psi \leq t-1-e\\
\alpha'_{\omega_{\psi_{t-1-e+t-\psi}}} & \text{if } \psi > t-1-e
\end{cases}\\
&= \begin{cases}
\alpha_{\omega_{t-1-(t-1-e-\psi)}} & \text{if } \psi \leq t-1-e\\
\alpha_{\omega_{t-1-(t-1-e+t-\psi)}} & \text{if } \psi > t-1-e
\end{cases}\\
&= \begin{cases}
\alpha_{\omega_{e+\psi}} & \text{if } \psi \leq t-1-e\\
\alpha_{\omega_{e-t+\psi}} & \text{if } \psi > t-1-e
\end{cases}\\
&= \alpha_{\omega_{\psi_e}}\\
&= \alpha_i.
\end{align*}
Note that $0 \leq \psi_e < t$ and hence $\psi_e = e + \psi$ if $\psi \leq t-1-e$ because $e + \psi \leq t-1$ and $\psi_e = e - t + \psi$ if $\psi > t-1-e$ because $e+\psi \geq t$.
\qed

By Claim~\ref{claimrelalphaialphaj} it also follows that the cycles of $\beta$ are disjoint: suppose $\beta$ contains $(\alpha_h,\alpha_i)$ and $(\alpha_j,\alpha_i)$ for some pairwise different $h,i,j \in [0,t-1]$. Then we have $\alpha_h = \alpha_i' = \alpha_j$ which yields $h = j$ contradicting $h \neq j$.

Now it suffices to show $l_\infty(\beta, \alpha^{t_1}) \leq 1$ and $l_\infty(\beta, \alpha^{t_2}) \leq 1$. By Claim~\ref{claimreldi1di2} we have for all $i \in [0,t-1]$ and all $k \in [1,2]$
\begin{displaymath}
\alpha_i^{\alpha^{t_k}} \in \{\alpha_j-1,\alpha_j,\alpha_j+1\}
\end{displaymath}
for some $j \in [0,t-1]$. Moreover we have $\alpha_i' = \alpha_j$. By Claim~\ref{claimrelalphaialphaj} we then have $\alpha_j' = \alpha_i$ and hence
\begin{displaymath}
\alpha_j^{\alpha^{t_k}} \in \{\alpha_i-1,\alpha_i,\alpha_i+1\}.
\end{displaymath}
Thus we either have $\beta_i = (\alpha_i, \alpha_j)$ and $\beta_j = \mathrm{id}$ or $\beta_i = \mathrm{id}$ and $\beta_j = (\alpha_i, \alpha_j)$ yielding a distance of at most $1$ if $i \neq j$. In the case
\begin{displaymath}
\alpha_i^{\alpha^{t_k}} \in \{\alpha_i-1,\alpha_i,\alpha_i+1\}
\end{displaymath}
we have that $\alpha_i$ is a fixed point in $\beta$ yielding a distance of at most $1$. Thus we finally obtain $l_\infty(\beta, \alpha^{t_1}) \leq 1$ and $l_\infty(\beta, \alpha^{t_2}) \leq 1$.
\end{proof}

\begin{corollary}\label{corollarylinftyl1l2}
Let $t \in \mathbb{N}$ be odd and let $0 \leq t_1 < t_2 < t$ be such that $t_1 \not \equiv t_2 \bmod p$ for all primes $p$ with $p \mid t$. Moreover let $d \geq 3$ be an integer with $\gcd(d,t) = 1$ and let $0 \leq d_0 < d$. Then there are permutations $\gamma, \delta \in S_{t+d}$ such that $l_\infty(\delta, \gamma^{a_1}) \leq 1$ and $l_\infty(\delta, \gamma^{a_2}) \leq 1$ in which $a_r$ satisfies the congruences $a_r \equiv d_0 \bmod d$ and $a_r \equiv t_r \bmod t$ for $r \in [1,2]$.
\end{corollary}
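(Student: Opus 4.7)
The plan is to reduce the corollary directly to Theorem~\ref{theoremlinftyl1l2} by taking a disjoint product with a cycle on the fresh coordinates $[t+1,t+d]$, and to pick the exponent via the Chinese Remainder Theorem. The coprimality hypothesis $\gcd(d,t)=1$ is precisely what makes CRT applicable here.

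First, apply Theorem~\ref{theoremlinftyl1l2} to obtain a cycle $\alpha' \in S_t$ of length $t$ and an involution $\beta' \in S_t$ (a product of disjoint $2$-cycles) with $l_\infty(\beta',{\alpha'}^{t_1}) \leq 1$ and $l_\infty(\beta',{\alpha'}^{t_2}) \leq 1$. View $\alpha'$ and $\beta'$ as elements of $S_{t+d}$ fixing every point in $[t+1,t+d]$. Let $\epsilon = \llbracket t+1,t+d \rrbracket \in S_{t+d}$ be the cycle $(t+1,t+2,\dots,t+d)$, which fixes every point in $[1,t]$. Then define
\begin{displaymath}
\gamma = \alpha' \cdot \epsilon, \qquad \delta = \beta' \cdot \epsilon^{d_0}.
\end{displaymath}
Since $\alpha'$ and $\epsilon$ have disjoint supports they commute, so $\gamma^a = {\alpha'}^a \cdot \epsilon^a$ for all $a \in \mathbb{N}$. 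Also $\ord(\alpha') = t$ and $\ord(\epsilon) = d$.

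Next, since $\gcd(d,t)=1$, the Chinese Remainder Theorem guarantees, for each $r \in [1,2]$, the existence of $a_r \in \mathbb{N}$ with $a_r \equiv t_r \bmod t$ and $a_r \equiv d_0 \bmod d$. Using the orders computed above, ${\alpha'}^{a_r} = {\alpha'}^{t_r}$ and $\epsilon^{a_r} = \epsilon^{d_0}$, so
\begin{displaymath}
\gamma^{a_r} = {\alpha'}^{t_r} \cdot \epsilon^{d_0}.
\end{displaymath}

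Finally, evaluate the $l_\infty$ distance coordinatewise. For $i \in [1,t]$, the factor $\epsilon^{d_0}$ acts as the identity on both sides, so $|i^\delta - i^{\gamma^{a_r}}| = |i^{\beta'} - i^{{\alpha'}^{t_r}}|$, which is at most $1$ by the conclusion of Theorem~\ref{theoremlinftyl1l2}. For $i \in [t+1,t+d]$, the factors $\beta'$ and ${\alpha'}^{t_r}$ fix $i$, so both $\delta$ and $\gamma^{a_r}$ send $i$ to $i^{\epsilon^{d_0}}$, yielding difference $0$. Taking the max gives $l_\infty(\delta,\gamma^{a_r}) \leq 1$ for $r \in [1,2]$, as required. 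There is no real obstacle here; the work is entirely done inside Theorem~\ref{theoremlinftyl1l2}, and the corollary is merely a packaging step that attaches a secondary residue class $d_0 \bmod d$ via a disjoint cycle.
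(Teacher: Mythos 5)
Your proof is correct and follows essentially the same route as the paper: both take the permutations from Theorem~\ref{theoremlinftyl1l2} on the first $t$ points, append the cycle of length $d$ on disjoint fresh points (the paper writes this as an element of $S_t \times S_d$, you write it explicitly as $\llbracket t+1,t+d\rrbracket$), and observe that the $l_\infty$ distance decomposes blockwise. You merely make the implicit CRT step and the embedding into $S_{t+d}$ explicit, which the paper leaves to the reader.
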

\begin{proof}
Let $\alpha$ and $\beta$ be the permutations that Theorem~\ref{theoremlinftyl1l2} yields regarding the numbers $t_1,t_2,t$. Moreover let $\varepsilon = \llbracket d \rrbracket$. We define $\gamma = (\alpha,\varepsilon) \in S_t \times S_d$ and $\delta = (\beta,\varepsilon^{d_0}) \in S_t \times S_d$. Then we have $l_\infty(\delta,\gamma^{a_1}) \leq 1$ and $l_\infty(\delta,\gamma^{a_2}) \leq 1$ since $\gamma^{a_r} = (\alpha^{a_r},\varepsilon^{a_r}) = (\alpha^{t_r},\varepsilon^{d_0})$ with $r \in [1,2]$ and clearly $l_\infty(\varepsilon^{d_0},\varepsilon^{d_0}) \leq 1$ and $l_\infty(\beta,\alpha^{t_r}) \leq 1$ follows from Theorem~\ref{theoremlinftyl1l2}.
\end{proof}

\begin{theorem}\label{npcompletelinfty}
The {\sc Subgroup distance problem} regarding the $l_\infty$ distance is {\sf NP}-complete when the input group is abelian and given by two generators and $k=1$.
\end{theorem}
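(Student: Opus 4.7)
The plan is to establish {\sf NP}-hardness by a polynomial-time reduction from 3-SAT, treating Corollary~\ref{corollarylinftyl1l2} as the principal gadget-building tool; membership in {\sf NP} is already known from~\cite{buchheim}. The construction realises $\alpha_1, \alpha_2, \beta$ as direct products indexed by one variable-block per $x_i$ and one clause-block per $c_j$, with $\alpha_1$ and $\alpha_2$ supported on disjoint portions of $\{1, \ldots, N\}$ so that $\langle \alpha_1, \alpha_2 \rangle$ is abelian and given by exactly two generators. I would choose distinct odd primes $p_1, \ldots, p_n$ for variables and distinct odd primes $r_1, \ldots, r_m$ for clauses, all mutually coprime, so that the global $l_\infty \leq 1$ condition factors as a conjunction of block-local constraints and the Chinese Remainder Theorem lets us reason about each residue $(z_1 \bmod p_i, z_2 \bmod r_j)$ independently.

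For each variable $x_i$ I would instantiate Theorem~\ref{theoremlinftyl1l2} with $t = p_i$, $t_1 = 0$, $t_2 = 1$: on the corresponding block $\alpha_1$ acts as the theorem's cycle of length $p_i$, $\alpha_2$ acts as the identity, and $\beta$ is the theorem's product of disjoint $2$-cycles. The block-local distance should be at most $1$ iff $z_1 \bmod p_i \in \{0, 1\}$, so after all variable gadgets are in place, $z_1$ encodes a candidate truth assignment $\sigma$ via CRT. The converse direction is what the ``agreement at exactly one position'' clause of Theorem~\ref{theoremlinftyl1l2} is designed to yield, since the $2$-cycles composing $\beta$ have displacement at most $2$, and for any residue other than $0$ or $1 \bmod p_i$ at least one of the remaining positions must be displaced by more than one.

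For each clause $c_j$ on variables $x_{a_1}, x_{a_2}, x_{a_3}$ I would build a clause block whose role is to let $z_2 \bmod r_j$ act as an existential witness choosing which literal of $c_j$ is the one claimed to be true. Concretely, I would apply Corollary~\ref{corollarylinftyl1l2} (or a combination of several such instances) with $t$ a divisor of $p_{a_1} p_{a_2} p_{a_3}$, $d = r_j$, and $(t_1, t_2, d_0)$ chosen so that the set of pairs $(z_1 \bmod q_j, z_2 \bmod r_j)$ giving block-local $l_\infty \leq 1$ projects onto exactly the satisfying truth-assignments of $c_j$ after the variable identification. Mutual coprimality of the $r_j$ then means the witness $z_2 \bmod r_j$ can be chosen independently per clause by CRT, so existence of $(z_1, z_2)$ with global $l_\infty \leq 1$ is equivalent to satisfiability of the 3-SAT instance.

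The main obstacle is precisely the design of the clause gadget: the Corollary AND-conjoins its $t$-part and its $d$-part, whereas clause satisfaction is an OR over three literals. A naive assembly of three per-literal sub-gadgets sharing the same $r_j$ would make them mutually exclusive on the $d$-part rather than existentially alternative, so the cleverness lies in folding the three witnesses into a single Corollary instance with $(t_1, t_2)$ and $d_0$ engineered via CRT so that, after intersection with the variable-gadget constraint $z_1 \bmod p_{a_k} \in \{0, 1\}$, the surviving residues on the clause block are exactly the pairs corresponding to satisfying assignments of $c_j$. Proving the equivalence in both directions will rest on a tight converse of Theorem~\ref{theoremlinftyl1l2}---namely that $l_\infty(\beta, \alpha^x) \leq 1$ forces $x \in \{t_1, t_2\} \bmod t$---together with careful Chinese-remainder bookkeeping; I expect this converse to be the secondary technical point on which the whole argument hinges.
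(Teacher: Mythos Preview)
Your plan has a genuine gap at exactly the point you yourself flag: the clause gadget is not actually built, and the architectural decision you commit to early on makes it essentially unbuildable with the tools at hand.

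You stipulate that $\alpha_1$ and $\alpha_2$ have \emph{disjoint} supports. Under that assumption every block-local constraint is either a condition on $z_1$ alone or a condition on $z_2$ alone; the global $l_\infty\le 1$ condition is then a conjunction of such one-variable constraints, and no block ever couples $z_1$ with $z_2$. Consequently your ``witness'' $z_2\bmod r_j$ has no mechanism to inspect, let alone certify, the truth values $z_1\bmod p_{a_k}$ encoded by the variable gadgets. Moreover, Lemma~\ref{lemmak1atmost2solutions} bounds the solution set of any single $l_\infty\le 1$ block by \emph{two} residues, and Theorem~\ref{theoremlinftyl1l2} realises exactly two; so a single Corollary~\ref{corollarylinftyl1l2} instance on the clause block can admit at most two residues of $z_1\bmod t$, whereas a 3-CNF clause has seven satisfying assignments (and even \textsf{X3HS} has three). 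Taking several such blocks only intersects their solution sets, never enlarges them.

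The paper's reduction departs from your plan in two essential ways. First, it reduces from \textsf{X3HS} (exactly-one-in-three), not 3-SAT, so each clause has three target assignments rather than seven. Second, and crucially, the two generators are \emph{not} disjointly supported: on the first sub-block of each clause gadget both $\pi_1$ and $\pi_2$ act by the \emph{same} cycle $\gamma_{j,1}$, so the constraint there reads $x_1+x_2\equiv y_{j,1}$ or $y_{j,2}\bmod q_j$; on the second sub-block only $\pi_2$ acts, giving $x_2\equiv z_{j,1}$ or $z_{j,2}\bmod q_j$. The arithmetic of the chosen residues (in particular $z_{j,2}\equiv(1,0,-1)$ modulo the three primes) makes the four combinations $y_{j,r}-z_{j,s}$ hit all three \textsf{X3HS}-satisfying patterns, while the fourth combination is killed by the variable gadgets. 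It is precisely this shared action---and the resulting additive coupling $x_1+x_2$---that manufactures the OR from two two-element solution sets, and this is the idea your proposal is missing.
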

\begin{proof}
We give a log-space reduction from \textsf{X3HS}. Let $X$ be a finite set and $\mathcal{B} \subseteq 2^X$ be a set of subsets of $X$ all of size $3$. W.l.o.g. assume that $X = [1,n]$ and let $\mathcal{B} = \{C_1,\dots,C_m\}$. For $i \in X$ we denote by $D_i \subseteq [1,m]$ the ordered set of all numbers $j$ such that $i \in C_j$. For $k \in [1,|D_i|]$ we denote by $kD_i$ the $k^\text{th}$ element of $D_i$. For $i \in [1,n]$ and $j \in [0,m]$ let $p_{i,j}$ be the $(jn+i)^\text{th}$ odd prime. We define $q_j = \prod_{i \in C_j} p_{i,j}$. Moreover let $N = \sum_{i=1}^n p_{i,0}p_{i,m}|D_i| + 2\sum_{j=1}^m (p_{n,j}^2 + p_{n,j})$. We will work with the group
\begin{displaymath}
G = \prod_{i=1}^n V_i \times \prod_{j=1}^m U_j
\end{displaymath}
with $V_i = S_{p_{i,0}p_{i,m}}^{|D_i|}$ and $U_j = S_{p_{n,j}^2+p_{n,j}}^2$ which naturally embeds into $S_N$. We define auxiliary permutations by the following: for $i \in [1,n]$ and $k \in [1,|D_i|]$ let $\alpha_{i,kD_i}$ and $\beta_{i,kD_i}$ be the permutations that Theorem~\ref{theoremlinftyl1l2} yields regarding the solutions $0 \leq x_{i,k,1},x_{i,k,2} < p_{i,0}p_{i,kD_i}$ with
\begin{align*}
x_{i,k,1}&\equiv 0 \bmod p_{i,0}\\
x_{i,k,1}&\equiv 0 \bmod p_{i,kD_i}
\end{align*}
and
\begin{align*}
x_{i,k,2}&\equiv 1 \bmod p_{i,0}\\
x_{i,k,2}&\equiv 1 \bmod p_{i,kD_i}
\end{align*}
in which $\alpha_{i,kD_i}$ is a cycle of length $p_{i,0}p_{i,kD_i}$ and $\beta_{i,kD_i}$ is a product of $2$-cycles. Moreover we define the following: for $j \in [1,m]$ let $\gamma_{j,1},\delta_{j,1}$ be the permutations that Corollary~\ref{corollarylinftyl1l2} yields regarding the solutions $0 \leq y_{j,1},y_{j,2} < q_j$ with
\begin{align*}
y_{j,1}&\equiv 1 \bmod p_{i_1,j}\\
y_{j,1}&\equiv 0 \bmod p_{i_2,j}\\
y_{j,1}&\equiv 0 \bmod p_{i_3,j}
\end{align*}
and
\begin{align*}
y_{j,2}&\equiv 0 \bmod p_{i_1,j}\\
y_{j,2}&\equiv 1 \bmod p_{i_2,j}\\
y_{j,2}&\equiv 0 \bmod p_{i_3,j}
\end{align*}
in which $i_1 < i_2 < i_3 \in C_j$ are the elements of $C_j$. Furthermore for $j \in [1,m]$ let $\gamma_{j,2},\delta_{j,2}$ be the permutations that Corollary~\ref{corollarylinftyl1l2} yields regarding the solutions $0 \leq z_{j,1},z_{j,2} < q_j$ with
\begin{align*}
z_{j,1}&\equiv 0 \bmod p_{i_1,j}\\
z_{j,1}&\equiv 0 \bmod p_{i_2,j}\\
z_{j,1}&\equiv 0 \bmod p_{i_3,j}
\end{align*}
and
\begin{align*}
z_{j,2}&\equiv 1 \bmod p_{i_1,j}\\
z_{j,2}&\equiv 0 \bmod p_{i_2,j}\\
z_{j,2}&\equiv -1 \bmod p_{i_3,j}
\end{align*}
in which $i_1 < i_2 < i_3 \in C_j$ are the elements of $C_j$. Note that these permutations can be constructed in log-space. Also note that these solutions are the only solutions by Lemma~\ref{lemmak1atmost2solutions} and Lemma~\ref{linfty31lemma2}. We define the input group elements $\tau, \pi_1, \pi_2 \in G$ as follows where $i$ ranges over $[1,n]$, $j$ ranges over $[1,m]$ and $k$ ranges over $[1,|D_i|]$:
\begin{align*}
\tau &= (\tau_1,\dots,\tau_n, \tau'_1,\dots,\tau'_m) \text{ with}\\
\tau_i &= (\tau_{i,1},\dots,\tau_{i,|D_i|})\\
\tau_{i,k} &= \beta_{i,kD_i}\\
\tau'_j &= (\delta_{j,1},\delta_{j,2})
\end{align*}
\begin{align*}
\pi_1 &= (\rho_{1,1},\dots,\rho_{1,n},\sigma_{1,1},\dots,\sigma_{1,m}) \text{ with}\\
\rho_{1,i} &= (\rho_{1,i,1},\dots,\rho_{1,i,|D_i|})\\
\rho_{1,i,k} &= \alpha_{i,kD_i}\\
\sigma_{1,j} &= (\gamma_{j,1},\mathrm{id})
\end{align*}
and
\begin{align*}
\pi_2 &= (\rho_{2,1},\dots,\rho_{2,n},\sigma_{2,1},\dots,\sigma_{2,m}) \text{ with}\\
\rho_{2,i} &= (\rho_{2,i,1},\dots,\rho_{2,i,|D_i|})\\
\rho_{2,i,k} &= \mathrm{id}\\
\sigma_{2,j} &= (\gamma_{j,1},\gamma_{j,2}).
\end{align*}
Note that $\pi_1$ and $\pi_2$ commute.

Now we will show there are $x_1,x_2 \in \mathbb{N}$ such that $l_\infty(\tau, \pi_1^{x_1}\pi_2^{x_2}) \leq 1$ if and only if there is a subset $X' \subseteq X$ such that $|X' \cap C_j| = 1$ for all $j \in [1,m]$.

Suppose there are $x_1,x_2 \in \mathbb{N}$ such that $l_\infty(\tau, \pi_1^{x_1}\pi_2^{x_2}) \leq 1$. Then we define
\begin{displaymath}
X' = \{i \mid x_1 \equiv 1 \bmod p_{i,0}\}.
\end{displaymath}
\begin{claim}\label{claimlinftyikDi}
For all $i \in [1,n]$ and all $k \in [1,|D_i|]$ the following holds: $x_1 \equiv 0,1 \bmod p_{i,kD_i}$ and $x_1 \equiv 0,1 \bmod p_{i,0}$. Moreover $x_1 \equiv 1 \bmod p_{i,0}$ if and only if $x_1 \equiv 1 \bmod p_{i,kD_i}$.
\end{claim}
The first part follows from the fact that $l_\infty(\tau_{i,k}, \rho_{1,i,k}^{x_1}\rho_{2,i,k}^{x_2}) = l_\infty(\beta_{i,kD_i}, \alpha_{i,kD_i}^{x_1}) \leq 1$ if and only if $x_1 \equiv x_{i,k,1},x_{i,k,2} \bmod p_{i,0}p_{i,kD_i}$. The second part follows from the definitions of $x_{i,k,1}$ and $x_{i,k,2}$.
\qed

\begin{claim}\label{claimlinftyjexactly1a}
For all $j \in [1,m]$ there is exactly one $a \in C_j$ such that $x_1 \equiv 1 \bmod p_{a,j}$ and $x_1 \equiv 0 \bmod p_{b,j}$ for all $b \in C_j \setminus \{a\}$.
\end{claim}
Consider the projection onto the factor $U_j$. We have $l_\infty(\tau'_j, \sigma_{1,j}^{x_1}\sigma_{2,j}^{x_2}) \leq 1$ which gives us the two statements
\begin{equation}\label{eqclaimujsplit1}
l_\infty(\delta_{j,1}, \gamma_{j,1}^{x_1}\gamma_{j,1}^{x_2}) \leq 1
\end{equation}
and
\begin{equation}\label{eqclaimujsplit2}
l_\infty(\delta_{j,2}, \gamma_{j,2}^{x_2}) \leq 1.
\end{equation}
By \eqref{eqclaimujsplit2} we obtain $x_2 \equiv z_{j,1},z_{j,2} \bmod q_j$. Moreover $l_\infty(\delta_{j,1}, \gamma_{j,1}^x) \leq 1$ holds if and only if $x \equiv y_{j,1},y_{j,2} \bmod q_j$. Hence by \eqref{eqclaimujsplit1} we obtain $x_1 + x_2 \equiv y_{j,1},y_{j,2} \bmod q_j$. If $x_2 \equiv z_{j,1} \equiv 0 \bmod q_j$ we obtain $x_1 \equiv y_{j,1},y_{j,2} \bmod q_j$. If $x_2 \equiv z_{j,2} \bmod q_j$ we obtain the following
\begin{align*}
x_1 + x_2 &\equiv x_1 + 1 \bmod p_{i_1,j}\\
x_1 + x_2 &\equiv x_1 + 0 \bmod p_{i_2,j}\\
x_1 + x_2 &\equiv x_1 - 1 \bmod p_{i_3,j}
\end{align*}
in which $i_1 < i_2 < i_3 \in C_j$ are the elements of $C_j$. In the case $x_1+x_2 \equiv y_{j,2} \bmod q_j$ we obtain
\begin{align*}
x_1 + 1 &\equiv 0 \bmod p_{i_1,j}\\
x_1 + 0 &\equiv 1 \bmod p_{i_2,j}\\
x_1 - 1 &\equiv 0 \bmod p_{i_3,j}
\end{align*}
which gives us by
\begin{align*}
x_1 &\equiv -1 \bmod p_{i_1,j}\\
x_1 &\equiv 1 \bmod p_{i_2,j}\\
x_1 &\equiv 1 \bmod p_{i_3,j}
\end{align*}
a contradiction since $x_1 \equiv -1 \bmod p_{i_1,j}$ is not possible by Claim~\ref{claimlinftyikDi}. For this also note that $p_{i,j} \geq 3$. Thus $x_1+x_2 \equiv y_{j,1} \bmod q_j$ and
\begin{align*}
x_1 + 1 &\equiv 1 \bmod p_{i_1,j}\\
x_1 + 0 &\equiv 0 \bmod p_{i_2,j}\\
x_1 - 1 &\equiv 0 \bmod p_{i_3,j}
\end{align*}
which gives us
\begin{align*}
x_1 &\equiv 0 \bmod p_{i_1,j}\\
x_1 &\equiv 0 \bmod p_{i_2,j}\\
x_1 &\equiv 1 \bmod p_{i_3,j}.
\end{align*}
Now we define $0 \leq y_{j,3} < q_j$ as the smallest positive integer satisfying the congruences
\begin{align*}
y_{j,3} &\equiv 0 \bmod p_{i_1,j}\\
y_{j,3} &\equiv 0 \bmod p_{i_2,j}\\
y_{j,3} &\equiv 1 \bmod p_{i_3,j}.
\end{align*}
Hence there is exactly one $a \in [1,3]$ such that $x_1 \equiv y_{j,a} \equiv 1 \bmod p_{i_a,j}$ and for all $b \in [1,3] \setminus \{a\}$ we have $x_1 \equiv y_{j,a} \equiv 0 \bmod p_{i_b,j}$ which proves the claim. For this also note that the congruence for $x_2$ can be chosen suitably for every $j \in [1,m]$. This does not interfere other congruences since $q_{j_1}$ and $q_{j_2}$ are coprime for $j_1 \neq j_2$.
\qed

Now we show $|X' \cap C_j| = 1$ for all $j \in [1,m]$. Let $j \in [1,m]$. By Claim~\ref{claimlinftyjexactly1a} there is exactly one $a \in C_j$ such that $x_1 \equiv 1 \bmod p_{a,j}$ and $x_1 \equiv 0 \bmod p_{b,j}$ for all $b \in C_j \setminus \{a\}$. By Claim~\ref{claimlinftyikDi} we have $x_1 \equiv 1 \bmod p_{i,a}$ if and only if $x_1 \equiv 1 \bmod p_{a,0}$. Hence $a \in X'$. Moreover by Claim~\ref{claimlinftyikDi} $x_1 \equiv 0 \bmod p_{b,j}$ if and only if $x_1 \equiv 0 \bmod p_{b,0}$ and by this $b \not \in X'$ for all $b \in C_j \setminus\{a\}$. Hence $|X' \cap C_j| = 1$.

Vice versa suppose there is a subset $X' \subseteq X$ such that $|X' \cap C_j| = 1$ for all $j \in [1,m]$. Then we define $x_1 \in \mathbb{N}$ as the smallest positive integer satisfying the congruences
\begin{displaymath}
x_1 \equiv \begin{cases}
1 \bmod p_{i,0}p_{i,kD_i} & \text{if } i \in X'\\
0 \bmod p_{i,0}p_{i,kD_i} & \text{if } i \not \in X'
\end{cases}
\end{displaymath}
for all $i \in [1,n]$ and all $k \in [1,|D_i|]$. Then by projecting onto the factor $V_i$ we clearly have $l_\infty(\tau_{i,k}, \rho_{1,i,k}^{x_1}\rho_{2,i,k}^{x_2}) = l_\infty(\beta_{i,k}, \alpha_{i,kD_i}^{x_1}) \leq 1$ because $x_1 \equiv x_{i,k,1} \bmod p_{i,0}p_{i,kD_i}$ or $x_1 \equiv x_{i,k,2} \bmod p_{i,0}p_{i,kD_i}$. Since $|X' \cap C_j| = 1$ for all $j \in [1,m]$ there is exactly one $a \in C_j$ such that $x_1 \equiv 1 \bmod p_{a,j}$ and $x_1 \equiv 0 \bmod p_{b,j}$ for all $b \in C_j \setminus \{a\}$ and thus $x_1 \equiv y_{j,a} \bmod q_j$. Hence we can define $x_2 \in \mathbb{N}$ as the smallest positive integer satisfying the congruences
\begin{displaymath}
x_2 \equiv \begin{cases}
z_{j,1} \bmod q_j & \text{if } x_1 \equiv y_{j,1},y_{j,2} \bmod q_j\\
z_{j,2} \bmod q_j & \text{if } x_1 \equiv y_{j,3} \bmod q_j.
\end{cases}
\end{displaymath}
Then by projecting onto the factor $U_j$ we have
\begin{displaymath}
l_\infty(\delta_{j,1}, \gamma_{j,1}^{x_1}\gamma_{j,1}^{x_2}) \leq 1
\end{displaymath}
and
\begin{displaymath}
l_\infty(\delta_{j,2}, \gamma_{j,2}^{x_2}) \leq 1
\end{displaymath}
because $x_2 \equiv z_{j,1} \bmod q_j$ or $x_2 \equiv z_{j,2} \bmod q_j$ and $x_1 + x_2 \equiv y_{j,3} + z_{j,2} \equiv y_{j,1} \bmod q_j$ or $x_1 + x_2 \equiv x_1 + z_{j,1} \equiv x_1 \equiv y_{j,1},y_{j,2} \bmod q_j$ which gives us $l_\infty(\tau'_j, \sigma_{1,j}^{x_1}\sigma_{2,j}^{x_2}) \leq 1$ from which it follows now that $l_\infty(\tau, \pi_1^{x_1}\pi_2^{x_2}) \leq 1$.
\end{proof}

\section{Conclusion}
We have shown that the \textsc{Subgroup distance problem} is {\sf NP}-complete in cyclic permutation groups for all metrics mentioned in the introduction. This paper only focuses on the \textsc{Subgroup distance problem} but in the literature also the maximum subgroup distance problem was studied in \cite{buchheim} and the weight problem and further variants were studied in \cite{cameron}. Further research could try to show also for these problems {\sf NP}-completeness when the input group is cyclic or at least given by a few generators since {\sf NP}-completeness is not necessarily obtainable for cyclic groups. Although the \textsc{Subgroup distance problem} is {\sf NP}-complete in cyclic permutation groups for all metrics mentioned in the introduction this does not necessarily hold for the minimum weight problem in cyclic groups. We give an example: consider the minimum weight problem regarding the Hamming weight (i.e. $w_H(\tau) = |\{i \mid i^\tau \neq i\}|$). It can be decided in polynomial time whether there is a number $z \in \mathbb{N}$ with $z \not \equiv 0 \bmod \ord(\tau)$ such that $w_H(\tau^z) \leq k$ for some $\tau \in S_n$ by simply checking whether there is a prime $p \mid \ord(\tau)$ such that $w_H\left(\tau^{\frac{\ord(\tau)}{p}}\right) \leq k$. Note that such primes are relatively small since $\ord(\tau) \mid n!$ and hence $p \leq n$. On the other hand in \cite{lohrey} it was shown that it is {\sf NP}-complete to decide whether for some given $\alpha,\beta \in S_n$ the coset $\beta \langle \alpha \rangle$ contains a fixed-point-free element $\beta\alpha^z$ for some $z \in \mathbb{N}$. This problem is equivalent to asking whether there is $z \in \mathbb{N}$ such that $H(\beta,\alpha^{-z}) \geq n$. This is seen as follows: for all $i \in [1,n]$ we have $i^{\beta\alpha^z} \neq i$ if and only if for all $i \in [1,n]$ we have $i^{\beta} = i^{\beta\alpha^z\alpha^{-z}} \neq i^{\alpha^{-z}}$. By this the maximum subgroup distance problem regarding the Hamming distance is {\sf NP}-complete when the input group is cyclic.

\subsection{Open Problems}
We have shown that it can be decided in {\sf NL} whether for given permutations $\alpha,\beta \in S_n$ there is $x \in \mathbb{N}$ such that $l_\infty(\beta,\alpha^x) \leq 1$. We do not know if this problem is {\sf NL}-complete or can even be solved in deterministic log-space. Moreover this problem becomes {\sf NP}-complete when the input group is abelian and given by at least $2$ generators. However we were only able to proof {\sf NP}-completeness for the problem $l_\infty(\beta,\alpha^x) \leq k$ when $k$ is part of the input rather than a fixed value. Therefore it remains open whether the \textsc{Subgroup distance problem} regarding the $l_\infty$ distance is {\sf NP}-complete in cyclic permutation groups for any fixed $k \geq 2$.

\paragraph{Acknowledgement}~\bigskip

This work has been supported by the DFG research project Lo748/12-2.

\bibliographystyle{abbrv}

\end{document}